\documentclass[12pt]{amsart}

\usepackage[initials]{amsrefs}
\usepackage{amssymb,amsmath,amsfonts,latexsym,amsthm,paralist,graphicx}

\usepackage[a4paper,nomarginpar]{geometry}

\usepackage[english]{babel}

\newtheorem{theorem}{Theorem}[section]
\newtheorem{corollary}[theorem]{Corollary}
\newtheorem{proposition}[theorem]{Proposition}
\newtheorem{lemma}[theorem]{Lemma}

\theoremstyle{definition}

\newtheorem{example}[theorem]{Example}

\newtheorem{remark}[theorem]{Remark}
\newtheorem{definition}[theorem]{Definition}



\newcommand{\N}{\mathbb{N}} 
\newcommand{\Q}{\mathbb{Q}} 
\newcommand{\R}{\mathbb{R}} 
\newcommand{\K}{\mathbb{K}}

\newcommand{\card}{\operatorname{card}}
\newcommand{\supp}{\operatorname{supp}}

\begin{document}

\title[Anti M-Weierstrass function sequences]{Anti M-Weierstrass function sequences}

\date{}

\author[Calder\'on]{Mar\'{\i}a del Carmen Calder\'on-Moreno}
\author[Gerlach]{Pablo Jos\'e Gerlach-Mena}
\author[Prado]{Jos\'e Antonio Prado-Bassas}
\address{Dpto. An\'{a}lisis Matem\'{a}tico,
\newline\indent Facultad de Matem\'{a}ticas,
\newline\indent Universidad de Sevilla, \newline\indent Avda. Reina Mercedes s/n,\newline\indent 41080 Sevilla, Spain.}
\email{mccm@us.es, gerlach@us.es, bassas@us.es}

%
%

\keywords{series of functions, uniform convergence, divergence, spaceability, algebrability}
\thanks{{\it 2020 Mathematics Subject Classification.} 46B87, 28A20, 40A30, 46A45}
\thanks{The authors
have been partially supported by the Plan Andaluz de Investigaci\'on de la Junta de Andaluc\'{\i}a FQM-127 and by MICINN Grant PGC2018-098474-B-C21.}

\begin{abstract}
Large algebraic structures are found inside the space of sequences of continuous functions on a compact interval having the property that, the series defined by each sequence converges absolutely and uniformly on the interval but the series of the upper bounds diverges. So showing that there exist many examples satisfying the conclusion but not the hypothesis of the Weierstrass M-test.
\end{abstract}

\maketitle

\section{Introduction}

In any field of Mathematics, whenever the uniform convergence of a series of functions $\sum_{n=1}^\infty f_n(x)$ must be studied (where $f_n:X\to \Bbb K$, $X\ne\varnothing$ and $\Bbb K=$ the real line $\Bbb R$ or the complex plane $\Bbb C$), the tool we first think about is the well-known Weierstrass M-test. If there exists a sequence $(M_n)_n$ of positive numbers such that its series is convergent and $|f_n(x)|\le M_n$ for every $x\in X$ and every $n\in\Bbb N$ ($:=$the set of all positive integers), then the series $\sum_{n=1}^\infty f_n(x)$ is absolutely and uniformly convergent on $X$ (see, for instance, \cite[\S 9.6]{Apostol}).

The converse is false in general. Concretely, the hypothesis about the majorant sequence can be dropped without avoiding the uniform convergence of the series. Indeed, for every $n\in \Bbb N$ and every $x\in[0,1]$, define \begin{equation}\label{Ex:basic}f_n(x)=\begin{cases}
  \frac{1}{n}\sin^2\left(2^{n+1}\pi x\right)& \text{if }x\in\left(2^{-(n+1)},2^{-n}\right)\\
  0&\text{elsewhere.}
\end{cases}\end{equation}
The sequence $f_n(x)$ cannot be majorated by any summable sequence of positive numbers, since $\sup_{x\in[0,1]} |f_n(x)|=\frac{1}{n}$; however, the series $\sum_{n=1}^\infty f_n(x)$ is absolutely and uniformly convergent (see \cite[Chapter 1, Example 10]{bourchtein} for more details).

Such a sequence of functions will be called {\em Anti M-Weierstrass} (see Definition \ref{Def:AntiM} below).

We denote by ${C}([a,b])^{\Bbb N}$ the set of all sequences of continuous functions on a nondegenerate compact interval $[a,b]\subset\Bbb R$. Recall that if we endow $C([a,b])^{\Bbb N}$ with the product topology inherited from $\left(C([a,b]),\|\cdot\|_\infty\right)$, where $\|f\|_\infty =\sup_{x\in[a,b]}|f(x)|$ is the classical supremum norm, we get a complete metrizable and separable topological vector space (see \cite[Chapter 7]{Willard}).


\begin{definition}\label{Def:AntiM}
  Let $(f_n)_n\in C([a,b])^{\Bbb N}$ be a sequence of continuous functions on $[a,b]$. We say that $(f_n)_n$ is an {\em Anti M-Weierstrass sequence} if $\sum_{n=1}^\infty f_n(x)$ is absolutely and uniformly convergent on $[a,b]$, but $\sum_{n=1}^\infty \|f_n\|_\infty$ diverges.
\end{definition}
We will denote by $\mathcal{AMW}([a,b])$ (or just $\mathcal{AMW}$, whenever there is no possibility of confusion) {\it the family of Anti M-Weierstrass sequences on $[a,b]$}.

\medskip

The aim of this paper is constructing large linear structures of Anti M-Weiers\-trass sequences, and this objective lies in the branch of Lineability, a trend of research that has attracted the attention of many mathematicians during the past few decades. We introduce in the next paragraph the main notions about Lineability we are going to deal with. We refer the interested reader to \cite{librolineabilidad, survey} for a general background.

Given a vector space $X$ and a (finite or infinite) cardinal number $\kappa$, we say that a subset $A$ of $X$ is $\kappa$-lineable whenever there is a vector space $M$ of dimension $\kappa$ such that $M \setminus \{0\} \subset A$; when $\kappa=\dim(X)$, we speak about maximal-lineability. Moreover, if $X$ is a topological vector space, then $A$ is said to be spaceable (dense-lineable, $\kappa$-dense-lineable, maximal-dense-lineable, resp.) in $X$, if there is a closed infinite dimensional (a dense, a dense $\kappa$-dimensional, a dense $\dim(X)$-dimensional, resp.) vector space $M$ with $M\setminus\{0\}\subset A$. When, in addition, $X$ is contained in some (linear) algebra, then $A$ is called $\kappa$-algebrable if there is an algebra $M$ so that $M \setminus \{0\} \subset A$ and the cardinality of any minimal system of generators of $M$ is $\kappa$; if the algebra can be taken to be free, we say that $A$ is strongly $\kappa$-algebrable; and if the (free) algebra is dense in $X$, we speak about (strongly) $\kappa$-dense-algebrability. If the algebraic structure of $X$ is commutative, the strong algebrability is equivalent to the existence of a generating system $B$ with $\card(B)=\kappa$ of the algebra $M$ such that for any $N \in \N$, any nonzero polynomial $P$ in $N$ variables without constant term and any distinct $b_1,\ldots,b_N \in B$, we have that $P(b_1, \ldots ,b_N) \in M\setminus\{0\}$.

Concerning lineability of series, in 2005 Bayart \cite{Bayart2005} showed that the set of continuous functions on the unit circle $\Bbb T$ whose Fourier series diverges on a set $E\subset\Bbb T$ of measure zero is dense-lineable. A year later, Aron, P\'erez-Garc\'{\i}a and Seoane-Sep\'ulveda \cite{ArPGSeo} stated the dense-algebrability. When $E$ is countable, results on lineability of divergent Fourier series with additional properties are obtained in \cite{Bernal2012} or \cite{Muller2010}.

Again Bayart \cite{Bayart2005o,Bayart2005} showed that the set of Dirichlet series $f(s)=\sum_{n=1}^\infty a_nn^{-s}$ that are bounded in the right half-plane and diverge everywhere in the imaginary axis is lineable and spaceable. Together with Quarta \cite{BayQua}, they were able to establish also the algebrability.

In the setting of Banach spaces, Aizpuru, P\'erez-Eslava and Seoane-Sep\'ulveda in 2006 \cite{AizPESeo}, asserted that the set of unconditionally convergent but not absolutely convergent series of any infinite dimensional Banach space is $\mathfrak c$-lineable (where  $\mathfrak c$ denotes the cardinality of continuum). Moreover, the set of sequences such that its partial sums are bounded but the series diverges is also $\mathfrak c$-lineable. If we focus on the complex plane, Bartoszewicz, G\l{a}b and Poreda proved in \cite{BGP} that the set of nonabsolutely convergent complex series and the set of divergent complex series with bounded partial sums are $\mathfrak{c}$-algebrable.

Following with sequences of real numbers, in 2013 Bartoszewicz and G\l{a}b  \cite{BarGlab13} showed that $c_0\setminus\bigcup_{p\ge1}\ell_p$ is densely strongly $\mathfrak{c}$-algebrable, where $c_0$ denotes the set of all real sequences converging to zero, and $\ell_p$ is the set of all $p$-summable real sequences.

Later, in 2017, Ara\'ujo {\it et al}.~\cite{ABMPS} studied the linear structure of the family of sequences of real numbers whose corresponding series fail both the root and ratio test. Specifically, they show that the set of sequences in $\ell_1$ generating series for which the ratio or the root tests fail and the set of sequences in $\omega$ (the vector space of all real sequences endowed with the product topology) generating divergent series for which the ratio and the root tests fail are $\mathfrak{c}$-dense-lineable in $\ell_1$ or $\omega$.

In this paper we are concerned with lineability --in its several degrees-- of the family $\mathcal{AMW}$ described in Definition \ref{Def:AntiM}. In Section 2 we shall define a special family of sequences of functions that will be crucial in the forthcoming constructions and provide several auxiliar results about it. Section 3 is devoted to construct several vector spaces inside $\mathcal{AMW}$ and to get its maximal dense lineability and spaceability. Finally, in Section 4, we conclude the paper by establishing, as a consequence of a more general result, the strong $\mathfrak{c}$-algebrability of our family $\mathcal{AMW}$.

\section{Concepts and Examples}

Inspired by the Example (\ref{Ex:basic}) of Anti M-Weierstrass sequence provided in the introduction, we can establish a very useful auxiliary result. By $\supp(f)$ we denote the support of a function $f :[a,b] \to \K$, that is, the set of points $x \in [a,b]$ where $f$ does not vanish.

Let $\mathcal{F}$ be the family of all sequences of functions $(u_n)_n \in C([a,b])^{\N}$ such that:
\begin{itemize}
\item The supports are pairwise disjoint, that is,
\begin{equation}
\supp(u_n) \cap \supp(u_m) = \varnothing, \qquad  n \neq m.
\end{equation}
\item The sequence $(u_n)_n$ is both uniformly bounded and uniformly far from zero, that is,
\begin{equation}
0< \inf_n ||u_n||_\infty \leq \sup_n ||u_n||_\infty <+ \infty.
\end{equation}
\end{itemize}


\begin{lemma}\label{FA}
Let $(u_n)_n \in \mathcal{F}$ and let $(a_n)_n \subset \R$. Then we have:
\begin{itemize}
\item[{\rm (a)}] The series $\displaystyle{ \sum_{n=1}^\infty a_n u_n(x)}$ converges absolutely on $[a,b]$.

\item[{\rm (b)}] The series $\displaystyle{ \sum_{n=1}^\infty a_n u_n(x)}$ converges uniformly on $[a,b]$ if and only if $(a_n)_n \in c_0$.

\item[{\rm (c)}] The series $\displaystyle{ \sum_{n=1}^\infty \| a_n u_n \|_\infty} < +\infty$ if and only if $(a_n)_n \in \ell_1$.

\end{itemize}
\end{lemma}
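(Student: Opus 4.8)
The plan is to exploit the key structural feature of the family $\mathcal{F}$: since the supports of the $u_n$ are pairwise disjoint, for each fixed $x \in [a,b]$ at most one term $a_n u_n(x)$ is nonzero. This collapses every pointwise statement about the series into a statement about a single term, which is what makes all three parts essentially elementary once set up correctly. Throughout I will write $\alpha := \inf_n \|u_n\|_\infty > 0$ and $\beta := \sup_n \|u_n\|_\infty < +\infty$ for the two-sided bound guaranteed by membership in $\mathcal{F}$.

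For part (a), I would fix $x \in [a,b]$ and observe that by disjointness of supports there is at most one index $n=n(x)$ with $u_n(x) \neq 0$; hence $\sum_n |a_n u_n(x)|$ has at most one nonzero term and converges trivially. So absolute convergence holds for \emph{every} real sequence $(a_n)_n$, with no hypothesis needed. For part (c), the point is that $\|a_n u_n\|_\infty = |a_n|\,\|u_n\|_\infty$, and the two-sided bound gives $\alpha |a_n| \le \|a_n u_n\|_\infty \le \beta |a_n|$. Summing over $n$ yields $\alpha \sum_n |a_n| \le \sum_n \|a_n u_n\|_\infty \le \beta \sum_n |a_n|$, so the series of sup-norms is finite if and only if $\sum_n |a_n| < +\infty$, i.e.\ if and only if $(a_n)_n \in \ell_1$. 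This is just a sandwich argument on nonnegative terms.

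Part (b) is the one requiring real care, and I expect the ``only if'' direction to be the main obstacle. For the ``if'' direction, assume $(a_n)_n \in c_0$ and estimate the tail of the series uniformly: for any $x$, the partial-sum remainder $\big|\sum_{n=N}^{\infty} a_n u_n(x)\big|$ again involves at most one nonzero term by disjointness, so it is bounded by $\sup_{n \ge N} |a_n u_n(x)| \le \beta \sup_{n \ge N} |a_n|$, a bound independent of $x$ that tends to $0$ as $N \to \infty$ precisely because $a_n \to 0$. Hence the Cauchy criterion for uniform convergence is satisfied. For the ``only if'' direction I would argue by contrapositive: if $(a_n)_n \notin c_0$, then there is $\delta > 0$ and a subsequence with $|a_{n_k}| \ge \delta$. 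For each such $n_k$, pick a point $x_k \in \supp(u_{n_k})$ where $|u_{n_k}(x_k)| = \|u_{n_k}\|_\infty \ge \alpha$ (attained by continuity on the compact interval). Then $|a_{n_k} u_{n_k}(x_k)| \ge \delta \alpha > 0$, and since this single term is isolated in its support, the remainder tail at $x_k$ fails to be small uniformly; this violates the Cauchy criterion for uniform convergence, so the series cannot converge uniformly. The subtle point to state cleanly is that the chosen points $x_k$ witness large individual terms, and because supports are disjoint these large terms cannot be cancelled by neighbouring terms — so they genuinely obstruct uniform convergence rather than merely pointwise convergence.
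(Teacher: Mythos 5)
Your proposal is correct and takes essentially the same approach as the paper: disjointness of supports collapses every pointwise sum to a single term (giving (a) and the uniform tail bound in the ``if'' part of (b)), the two-sided bound $0<\inf_n\|u_n\|_\infty\le\sup_n\|u_n\|_\infty<+\infty$ yields the comparison/sandwich argument for (c), and norm-attaining points $x_n$ (which exist by continuity on the compact interval) drive the necessity in (b). The only cosmetic difference is that you prove the ``only if'' of (b) by contrapositive via a subsequence with $|a_{n_k}|\ge\delta$, while the paper argues directly that uniform convergence forces $a_nu_n\to 0$ uniformly and hence $a_n\to 0$ --- the same mechanism, logically equivalent.
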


\begin{proof}

(a)  The absolute convergence of the series is immediate, since the disjointness of the supports of the $u_n$'s implies that, for a fixed $x_0 \in [a,b]$, either $u_n(x_0) =0$ for all $n \in \N$, or there exists only one $n_0 \in \N$ such that $x_0 \in \supp(u_{n_0})$, and
    $$\sum_{n=1}^\infty |a_n u_n(x_0)| = |a_{n_0} u_{n_0}(x_0)| \quad (< + \infty).$$

(b)  Firstly, assume that $(a_n)_n \in c_0$. Because $(u_n)_n \in \mathcal{F}$, then $M := \sup_n ||u_n||_\infty \in (0, +\infty)$. Given any $ \varepsilon > 0$, there exists $N \in \N$ such that $ |a_n| < \frac{\varepsilon}{M}$ for any $n \geq N$.

    Thus, as for each $x \in [a,b]$ there is at most one $n_0 \geq N$ such that $x \in \supp(u_{n_0})$,
$$ \left| \sum_{n=N}^{\infty} a_n u_n(x)  \right|     =  |a_{n_0} u_{n_0}(x)|  \leq  M \cdot  \frac{\varepsilon}{M} = \varepsilon.$$
    Hence, the uniformly convergence of the series on $[a,b]$ is obtained.

    Conversely, as the series is uniformly convergent, we have $a_n u_n(x) \to 0$ ($n \to \infty$) uniformly on $[a,b]$. Because $(u_n)_n \in \mathcal{F}$, then $L:= \inf_n ||u_n||_\infty >0$. Given any $\varepsilon >0$, there is $n_0 \in \N$ such that $|a_n u_n(x)| < \varepsilon \cdot L $ for any $x \in [a,b]$ and any $n \geq n_0$. But for any $n \in \N$, there exists, by continuity, a point $x_n \in [a,b]$ such that $|u_n(x_n)| = ||u_n||_\infty \geq L$, hence for $n \geq n_0$ we get
    $$|a_n| = \frac{|a_n u_n(x_n)|}{|u_n(x_n)|} \leq  \frac{\varepsilon \cdot L}{L }= \varepsilon,$$
    and $a_n \to 0 $ ($n \to \infty$).

(c)  As $(u_n)_n \in\mathcal{F}$, we have
\begin{equation}
0< L:= \inf_n ||u_n||_\infty \leq M:= \sup_n ||u_n||_\infty <+ \infty.
\end{equation}
Thus, by the comparison test $(a_n)_n \in \ell_1$ if and only if $(||a_nu_n||_\infty)_n \in \ell_1$, and we have (c).
\end{proof}

As a consequence of the above lemma we obtain the next fact.
\begin{corollary} \label{AMW}
For any sequence of functions $(u_n)_n \in \mathcal{F}$ and any sequence of scalars $(a_n)_n \in c_0\setminus \ell_1$, the sequence of continuous functions $(a_nu_n)_n$ is in $\mathcal{AMW}$.
\end{corollary}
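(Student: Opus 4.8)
The plan is to read the corollary off directly from the three parts of Lemma~\ref{FA}, since the hypothesis $(a_n)_n \in c_0 \setminus \ell_1$ has been engineered precisely to trigger the convergence conclusions while violating the summability of the norms. First I would unwind Definition~\ref{Def:AntiM}: to show $(a_n u_n)_n \in \mathcal{AMW}$ I must verify two things, namely that $\sum_{n=1}^\infty a_n u_n(x)$ converges absolutely and uniformly on $[a,b]$, and that $\sum_{n=1}^\infty \|a_n u_n\|_\infty$ diverges.

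Next I would split the hypothesis into its two halves. The membership $(a_n)_n \in c_0$ is immediate from the inclusion $c_0 \setminus \ell_1 \subset c_0$, and it feeds both part~(a) and the ``if'' direction of part~(b) of Lemma~\ref{FA}, yielding respectively the absolute and the uniform convergence of $\sum_{n=1}^\infty a_n u_n(x)$ on $[a,b]$. The remaining condition $(a_n)_n \notin \ell_1$ then feeds the contrapositive of part~(c): since that part asserts $\sum_{n=1}^\infty \|a_n u_n\|_\infty < +\infty$ if and only if $(a_n)_n \in \ell_1$, the failure of $\ell_1$-membership forces $\sum_{n=1}^\infty \|a_n u_n\|_\infty = +\infty$.

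Putting these together, both requirements of Definition~\ref{Def:AntiM} are satisfied, so $(a_n u_n)_n \in \mathcal{AMW}$, as claimed. There is essentially no obstacle to overcome here, as all the analytic content already resides in Lemma~\ref{FA}; the only point requiring care is to match each clause of the hypothesis to the correct part of the lemma, sending the $c_0$ membership to convergence and the $\ell_1$ non-membership to the divergence of the norm series. Finally, I would note in passing that $c_0 \setminus \ell_1 \neq \varnothing$ (for instance $a_n = 1/n$), so that the statement is not vacuous and genuinely produces Anti M-Weierstrass sequences from any $(u_n)_n \in \mathcal{F}$.
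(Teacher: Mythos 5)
Your proposal is correct and matches the paper's intent exactly: the paper offers no separate argument, presenting the corollary as an immediate consequence of Lemma~\ref{FA}, with $(a_n)_n \in c_0$ feeding parts (a) and (b) and $(a_n)_n \notin \ell_1$ feeding the contrapositive of part (c), just as you describe. Your closing remark on non-vacuousness is a harmless extra not present in the paper.
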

In particular, this corollary will allow us to provide a wide plethora of sequences exhibiting this strange behaviour.
In the following example we show that from any non null function $f \in C([a,b])$, many sequences in $\mathcal{F}$ can be constructed.
\begin{example}\label{sucesion}
Let $f\in C([a,b])\setminus \{0\}$. Let $\Lambda := (\alpha_n)_n$ be any sequence of scalars such that
$$a=\alpha_1 < \alpha_2 < \cdots < \alpha_{n-1} <\alpha_n < \cdots  \longrightarrow b \quad (n \to \infty).$$
For each $n \in \N$ we define the function $u^{\Lambda, f}_n : [a,b] \to \R $ by
$$u^{\Lambda, f}_n(x) := \left\{
\begin{array}{ll}
\displaystyle f(a) \cdot \frac{x-\alpha_{3n-2}}{\alpha_{3n-1}-\alpha_{3n-2}} & \text{if } \  x \in [\alpha_{3n-2}, \alpha_{3n-1}] \\[1em]
\displaystyle f\left(a + \frac{b-a}{\alpha_{3n}-\alpha_{3n-1}}(x-\alpha_{3n-1}) \right) & \text{if } \ x \in [\alpha_{3n-1}, \alpha_{3n}] \\[1em]
\displaystyle f(b)\cdot \frac{\alpha_{3n+1}-x}{\alpha_{3n+1}-\alpha_{3n}} & \text{if } \ x \in [\alpha_{3n}, \alpha_{3n+1}] \\[1em]
0 & \text{otherwise}.
\end{array}
\right.
$$
It is clear that $u^{\Lambda, f}_n \in C([a,b])$ and $\supp (u^{\Lambda, f}_n) \subset (\alpha_{3n-2} , \alpha_{3n+1} ) $ for each $n \in \N$, so the supports of the $u^{\Lambda, f}_n$'s are pairwise disjoint. In addition, $||u^{\Lambda, f}_n ||_\infty = ||f||_\infty \in (0, + \infty)$ for any $n\in \N$. So, trivially, $(u^{\Lambda, f}_n)_n \in \mathcal{F}$. Therefore, for any prefixed increasing sequence of scalars $\Lambda$ as above, we have defined an injective mapping
$$
\begin{array}{cccc}
\mathcal{J}_{\Lambda} : & C([a,b]) \setminus \{ 0 \} & \longrightarrow & \mathcal{F} \\
   & f & \longmapsto &\mathcal{J}_{\Lambda} (f):= (u_n^{\Lambda,f})_n,
   \end{array}
$$
satisfying the following properties:
\begin{enumerate}
\item $u_n^{\Lambda ,f } (\alpha_{3n-1}) =f(a)$ and $u_n^{\Lambda ,f } (\alpha_{3n}) =f(b)$ for any $n\in \N$ and any $f \in C([a,b]) \setminus \{0\}$.
\item $\supp(u_n^{\Lambda ,f }) \subset (\alpha_{3n-2}, \alpha_{3n+1})$ for any $f \in C([a,b]) \setminus \{0\}$ and any $n \in \N$.
\item For any $n \in \N$ there exists a linear affine transformation $\tau_n$ such that $\tau_n ([\alpha_{3n-1}, \alpha_{3n}])=[a,b]$ and $u_n^{\Lambda , f} = f \circ \tau_n$ on $[\alpha_{3n-1}, \alpha_{3n}]$ for each $f \in C([a,b]) \setminus \{ 0\}$.
\item $||u_n^{\Lambda ,f }||_\infty = ||f||_\infty $ for any $n \in \N$ and any $f \in C([a,b]) \setminus \{0 \} $.
\end{enumerate}
Observe that if we define $\mathcal{J}_{\Lambda} (0)$ as the null sequence, we have that $\mathcal{J}_{\Lambda} $ is a linear and injective mapping from $C([a,b])$ to $\mathcal{F} \cup \{0\}$. In particular if $\mathcal{C}$ is a linear vector subspace in $C([a,b])$ with dimension $\kappa$, then $\mathcal{J}_\Lambda (\mathcal{C})$ is a linear vector subspace in $\mathcal{F} \cup \{0 \}$ with dimension $\kappa$.
\end{example}
Furthermore, let $\ell_\infty (C([a,b]))$ be the vector space of all uniformly bounded sequences of continuous functions in $[a,b]$, which is a Banach space when endowed with the uniform supremum norm $$||(f_n)_n||_{\ell_\infty} := \sup_n ||f_n||_\infty.$$
Observe that $\mathcal{F} \subset \ell_\infty (C([a,b]))$, and for any sequence $\Lambda =(\alpha_n)_n$ as above and for each function $f\in C([a,b])$ we have $\mathcal{J}_\Lambda (f)  \in {\ell_\infty} (C([a,b]))$ and
$$|| \mathcal{J}_\Lambda (f) ||_{\ell_\infty} =\sup_n ||u_n^{\Lambda ,f } ||_\infty = ||f||_\infty.$$
Thus, we have obtained the following useful statement.
\begin{proposition} \label{isometry}
Let $\Lambda =(\alpha_n)_n \subset \R$ with $a=\alpha_1 < \alpha_2 < \cdots < \alpha_{n-1} <\alpha_n < \cdots  \longrightarrow b \quad (n \to \infty).$ Then the mapping
$$
\begin{array}{cccc}
\mathcal{J}_{\Lambda} : & C([a,b]) & \longrightarrow & \ell_\infty (C([a,b])) \\
   & f & \longmapsto &\mathcal{J}_{\Lambda} (f):= (u_n^{\Lambda,f})_n,
   \end{array}
$$
is a linear isometry whose rank is contained in $\mathcal{F} \cup \{0 \}$. In particular, the normed spaces $(C([a,b]), || \cdot ||_\infty)$ and $(\mathcal{J}_{\Lambda} (C([a,b])), || \cdot ||_{\ell^\infty})$ are isometrically isomorphic.
\end{proposition}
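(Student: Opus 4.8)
The plan is to read off all three assertions directly from the explicit formulas and the four properties already recorded in Example~\ref{sucesion}, since the map $\mathcal{J}_\Lambda$ and its range were analyzed there. First I would establish linearity by inspecting the piecewise definition of $u_n^{\Lambda,f}$. On the two lateral subintervals $[\alpha_{3n-2},\alpha_{3n-1}]$ and $[\alpha_{3n},\alpha_{3n+1}]$ the value $u_n^{\Lambda,f}(x)$ is the product of a fixed ($f$-independent) affine factor with $f(a)$, respectively $f(b)$; on the central subinterval $[\alpha_{3n-1},\alpha_{3n}]$ property~(3) gives $u_n^{\Lambda,f}=f\circ\tau_n$ with $\tau_n$ independent of $f$; and elsewhere $u_n^{\Lambda,f}\equiv 0$. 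Each of these expressions is linear in $f$, so $u_n^{\Lambda,f+\lambda g}=u_n^{\Lambda,f}+\lambda\,u_n^{\Lambda,g}$ for every $n$, whence $\mathcal{J}_\Lambda(f+\lambda g)=\mathcal{J}_\Lambda(f)+\lambda\,\mathcal{J}_\Lambda(g)$. One only checks that the subinterval endpoints cause no ambiguity, since the three formulas agree there.

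Next I would verify that $\mathcal{J}_\Lambda$ is norm-preserving. Property~(4) of Example~\ref{sucesion} already records $\|u_n^{\Lambda,f}\|_\infty=\|f\|_\infty$ for every $n$; the reason is that $\tau_n$ maps $[\alpha_{3n-1},\alpha_{3n}]$ \emph{onto} all of $[a,b]$, so the supremum of $|f\circ\tau_n|$ over the central subinterval equals $\|f\|_\infty$, while on the lateral pieces $|u_n^{\Lambda,f}|$ is bounded by $|f(a)|$ or $|f(b)|$, both $\le\|f\|_\infty$. Taking the supremum over $n$ gives $\|\mathcal{J}_\Lambda(f)\|_{\ell_\infty}=\sup_n\|u_n^{\Lambda,f}\|_\infty=\|f\|_\infty$, which is exactly the isometry. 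Since a linear isometry is automatically injective, $\mathcal{J}_\Lambda$ is a bijection from $C([a,b])$ onto its range, and that range, equipped with the norm $\|\cdot\|_{\ell_\infty}$ inherited from $\ell_\infty(C([a,b]))$, is isometrically isomorphic to $(C([a,b]),\|\cdot\|_\infty)$.

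Finally I would record that the range is contained in $\mathcal{F}\cup\{0\}$: for $f\neq 0$ the sequence $(u_n^{\Lambda,f})_n$ has pairwise disjoint supports by property~(2) and satisfies $\inf_n\|u_n^{\Lambda,f}\|_\infty=\sup_n\|u_n^{\Lambda,f}\|_\infty=\|f\|_\infty\in(0,+\infty)$ by property~(4), so it lies in $\mathcal{F}$; and $\mathcal{J}_\Lambda(0)$ is the null sequence by convention. I expect no genuine obstacle here: every ingredient has been prepared in Example~\ref{sucesion}, and the only mild care needed is the bookkeeping at the subinterval endpoints when checking linearity, together with the observation that the supremum over the central block is attained precisely because $\tau_n$ is onto. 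In short, the proposition amounts to assembling the computations of Example~\ref{sucesion} into the three stated conclusions.
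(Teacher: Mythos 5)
Your proposal is correct and takes essentially the same route as the paper: the paper offers no separate proof, stating Proposition \ref{isometry} as an immediate consequence of the observations in Example \ref{sucesion} (linearity and injectivity of $\mathcal{J}_\Lambda$ with the convention $\mathcal{J}_\Lambda(0)=0$, property (4) giving $\|u_n^{\Lambda,f}\|_\infty=\|f\|_\infty$ and hence $\|\mathcal{J}_\Lambda(f)\|_{\ell_\infty}=\|f\|_\infty$, and properties (2) and (4) placing the range in $\mathcal{F}\cup\{0\}$), which is exactly what you assemble. Your extra checks (endpoint consistency of the piecewise formulas, attainment of the supremum on the central block because $\tau_n$ is onto) are harmless elaborations of the same argument.
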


\section{Lineability and spaceability in $\mathcal{AMW}$}

In this section we show that $\mathcal{AMW}$ is maximal dense lineable but at the same time we provide many examples of concrete linear structures in $\mathcal{AMW}$.

\begin{theorem} \label{Thlin1}
Let M be a linear subspace such that $M \setminus \{0\} \subset c_0 \setminus \ell_1$ and $\dim (M)= \kappa$. Let $I$ be a set with $\card (I)= \kappa$ and $\{ (a_n^i)_n \}_{i\in I}$ be an algebraic basis of $M$. Then for any sequence of functions $(u_n)_n \in \mathcal{F}$, the set
$$\mathcal{M} := \{ (a_nu_n)_n : \ (a_n)_n \in M\}$$
is a linear subspace of dimension $\kappa$, the family $\{(a_n^i u_n )_n\}_{i\in I}$ is an algebraic basis of $\mathcal{M}$ and $\mathcal{M} \setminus \{0\} \subset \mathcal{AMW}$.
\end{theorem}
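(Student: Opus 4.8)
The plan is to realize $\mathcal{M}$ as the image of $M$ under an injective linear map and then to read off the two structural assertions from linear algebra, deferring the membership claim to Corollary~\ref{AMW}. Fixing the given sequence $(u_n)_n \in \mathcal{F}$, I would introduce the map
$$\Phi : M \longrightarrow C([a,b])^{\N}, \qquad (a_n)_n \longmapsto (a_n u_n)_n,$$
whose image is exactly $\mathcal{M}$. Linearity in the scalar coordinates is immediate, so $\mathcal{M} = \Phi(M)$ is automatically a linear subspace of $C([a,b])^{\N}$.

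The crucial point is the injectivity of $\Phi$. Because $(u_n)_n \in \mathcal{F}$, the defining condition $\inf_n \|u_n\|_\infty > 0$ guarantees that every $u_n$ is a nonzero continuous function. Hence, if $\Phi((a_n)_n) = 0$, that is $a_n u_n \equiv 0$ for each $n$, then necessarily $a_n = 0$ for all $n$, so $(a_n)_n = 0$. An injective linear map preserves dimension and sends a basis to a basis; therefore $\dim(\mathcal{M}) = \dim(M) = \kappa$, and $\{\Phi((a_n^i)_n)\}_{i\in I} = \{(a_n^i u_n)_n\}_{i\in I}$ is an algebraic basis of $\mathcal{M}$. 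This disposes of the first two assertions.

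For the inclusion $\mathcal{M} \setminus \{0\} \subset \mathcal{AMW}$, I would take an arbitrary nonzero element of $\mathcal{M}$. By injectivity it equals $\Phi((a_n)_n) = (a_n u_n)_n$ for a unique $(a_n)_n \in M \setminus \{0\}$. The hypothesis $M \setminus \{0\} \subset c_0 \setminus \ell_1$ then forces $(a_n)_n \in c_0 \setminus \ell_1$, and Corollary~\ref{AMW} applies verbatim to conclude that $(a_n u_n)_n \in \mathcal{AMW}$.

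Since all three conclusions collapse to the injectivity of $\Phi$ combined with Corollary~\ref{AMW}, I do not expect a genuine obstacle; the only step deserving explicit care is verifying that no $u_n$ vanishes identically, which is exactly what the condition $\inf_n \|u_n\|_\infty > 0$ in the definition of $\mathcal{F}$ supplies.
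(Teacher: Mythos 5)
Your proposal is correct and follows essentially the same route as the paper: both define the linear map $\Phi : (a_n)_n \mapsto (a_n u_n)_n$, derive injectivity from the fact that no $u_n$ vanishes identically (which the condition $\inf_n \|u_n\|_\infty > 0$ guarantees), and deduce all three conclusions from $\mathcal{M} = \Phi(M)$ together with Corollary~\ref{AMW}. Your write-up merely spells out the details the paper leaves implicit.
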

\begin{proof}
It is plain that the mapping $\Phi : (a_n)_n \in c_0 \longmapsto (a_nu_n)_n \in C([a,b])^{\N}$ is linear. Moreover, the fact that $u_n \ne 0$ for all $n \in \N$ implies that $\Phi$ is one-to-one. Then all conclusions follow from the equality $\mathcal{M} =\Phi (M)$ and from Corollary \ref{AMW}.
\end{proof}

By following a dual way (that is, fixing this time a sequence of $c_0 \setminus \ell_1$), we are able to obtain subspaces of $\mathcal{AMW}$.
\begin{theorem} \label{Thlin2}
Let $(a_n)_n \in c_0 \setminus \ell_1$ with $a_n \ne 0 $ for all $n \in \N$. Let $U$ be a linear subspace of dimension $\kappa$, having an algebraic basis $\{(u_n^i)_n\}_{i\in I}$ with $\card(I)= \kappa$, such that $U \setminus \{0\} \subset \mathcal{F}$. Then
$$\mathcal{U} := \{ (a_nu_n)_n: \ (u_n)_n \in U \}$$
is a linear subspace of dimension $\kappa$, the family $\{(a_nu_n^i)_n \}_{i\in I}$ is an algebraic basis of $\mathcal{U}$, and $\mathcal{U} \setminus \{0\} \subset \mathcal{AMW}$.
\end{theorem}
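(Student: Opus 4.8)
The plan is to mirror the argument of Theorem \ref{Thlin1}, but now fixing the scalar sequence $(a_n)_n$ and letting the function-sequence vary. Concretely, I would introduce the map
$$\Psi : (u_n)_n \in U \longmapsto (a_n u_n)_n \in C([a,b])^{\N},$$
which is plainly linear, the operation being termwise multiplication by the fixed scalars. Its image is exactly $\mathcal{U}$, so all the algebraic conclusions will follow once I establish that $\Psi$ is injective.

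First I would verify injectivity. Suppose $\Psi((u_n)_n)$ is the null sequence, that is, $a_n u_n \equiv 0$ on $[a,b]$ for every $n$. Since by hypothesis $a_n \neq 0$ for all $n \in \N$, this forces $u_n \equiv 0$ for all $n$, hence $(u_n)_n = 0$. Thus $\Psi$ is a linear isomorphism from $U$ onto $\mathcal{U} = \Psi(U)$. In particular $\dim(\mathcal{U}) = \dim(U) = \kappa$, and since a linear isomorphism carries a basis to a basis, the family $\{\Psi((u_n^i)_n)\}_{i \in I} = \{(a_n u_n^i)_n\}_{i \in I}$ is an algebraic basis of $\mathcal{U}$, with $\card(I) = \kappa$.

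It remains to check the inclusion $\mathcal{U} \setminus \{0\} \subset \mathcal{AMW}$. Any nonzero element of $\mathcal{U}$ equals $(a_n u_n)_n$ for some $(u_n)_n \in U \setminus \{0\}$. By the standing hypothesis $U \setminus \{0\} \subset \mathcal{F}$, so $(u_n)_n \in \mathcal{F}$; since also $(a_n)_n \in c_0 \setminus \ell_1$, Corollary \ref{AMW} applies directly and yields $(a_n u_n)_n \in \mathcal{AMW}$, as desired.

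There is essentially no serious obstacle here, the statement being the exact dual of Theorem \ref{Thlin1}. The only point that must not be overlooked is that the injectivity of $\Psi$ — and hence the preservation of dimension and of the basis — relies crucially on the hypothesis $a_n \neq 0$ for every $n$; without it the map could collapse dimensions and the basis could fail to remain independent. Everything else is a direct transfer of the structure of $U$ through the isomorphism $\Psi$, together with a single invocation of Corollary \ref{AMW}.
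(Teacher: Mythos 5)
Your proposal is correct and follows essentially the same route as the paper: the authors likewise define the map $\Psi : (u_n)_n \in U \mapsto (a_nu_n)_n$, note that it is linear and injective precisely because $a_n \ne 0$ for all $n$, observe $\mathcal{U} = \Psi(U)$, and invoke Corollary \ref{AMW}. You have merely written out in full the details the paper leaves implicit by its reference to the proof of Theorem \ref{Thlin1}.
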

\begin{proof}
It is similar to the proof of Theorem \ref{Thlin1}, except that this time one considers the mapping $\Psi: (u_n)_n \in U \longmapsto (a_nu_n)_n \in C([a,b])^{\N} $. Then $\Psi$ is linear and injective (because $a_n \ne 0 $ for each $n \in \N $), $\mathcal{U} = \Psi (U)$, and Corollary \ref{AMW} applies again.
\end{proof}
Recall that in any separable, metrizable, complete topological vector space (as it is the case of $c_0$, $\ell_p$, $C([a,b])$ or $C([a,b])^\N $) the maximal dimension of any linear vector subspace is the dimension of the continuum $\mathfrak{c}$. In particular if we consider in Theorem \ref{Thlin1} any linear vector subspace $M$ in $c_0\setminus \ell_1$ of dimension $\mathfrak{c}$, for instance $M= \textrm{span} \left\{ \left( \frac{1}{n^c} \right)_n : \ c \in (0,1)\right\}$), we obtain:
\begin{corollary} \label{colin1}
The set $\mathcal{AMW}$ is maximal lineable in $C([a,b])^\N$.
\end{corollary}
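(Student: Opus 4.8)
The plan is to invoke Theorem \ref{Thlin1} with a judiciously chosen subspace $M \subset c_0 \setminus \ell_1$ of maximal dimension, together with any fixed sequence $(u_n)_n \in \mathcal{F}$ (for concreteness, one of those built in Example \ref{sucesion} from a nonzero $f \in C([a,b])$). Since the remark preceding the statement guarantees that $\mathfrak{c}$ is the largest possible dimension of a linear subspace of the separable, metrizable, complete space $C([a,b])^{\N}$, it suffices to produce such an $M$ with $\dim(M) = \mathfrak{c}$: Theorem \ref{Thlin1} then delivers a subspace $\mathcal{M}$ with $\dim(\mathcal{M}) = \mathfrak{c}$ and $\mathcal{M} \setminus \{0\} \subset \mathcal{AMW}$, which is exactly the assertion of maximal lineability.

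First I would set $M := \mathrm{span}\left\{ (n^{-c})_n : c \in (0,1) \right\}$, as suggested in the preceding remark, and verify the three requirements of Theorem \ref{Thlin1}. The inclusion $M \subset c_0$ is immediate: every generator $(n^{-c})_n$ tends to $0$, and $c_0$ is a vector space, so every finite combination does as well.

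The heart of the matter is a single asymptotic dominance observation. Given finitely many distinct exponents $c_1 < c_2 < \cdots < c_k$ in $(0,1)$ and scalars with $\lambda_1 \neq 0$ (after discarding any terms of smaller exponent whose coefficient vanishes), the sequence $a_n = \sum_{j=1}^k \lambda_j n^{-c_j}$ satisfies $a_n \sim \lambda_1 n^{-c_1}$ as $n \to \infty$, because the smallest exponent decays slowest and hence dominates. From this fact both remaining requirements follow at once. For linear independence of the generating family — and therefore $\dim(M) = \mathfrak{c}$, as the index set $(0,1)$ has cardinality $\mathfrak{c}$ — I would assume such a combination vanishes identically, divide by $n^{-c_1}$ and let $n \to \infty$ to force $\lambda_1 = 0$, then iterate to annihilate every coefficient. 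For the inclusion $M \setminus \{0\} \subset c_0 \setminus \ell_1$, a nonzero element obeys $|a_n| \sim |\lambda_1| n^{-c_1}$ with $c_1 < 1$, so $\sum_n |a_n|$ diverges by comparison with the divergent $p$-series $\sum_n n^{-c_1}$; thus no nonzero element of $M$ lies in $\ell_1$.

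The asymptotic dominance itself is the main (and essentially the only) technical point; once it is established, everything else is routine bookkeeping, and the application of Theorem \ref{Thlin1} closes the argument.
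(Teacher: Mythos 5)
Your proposal is correct and takes essentially the same route as the paper, which obtains the corollary by applying Theorem \ref{Thlin1} to exactly the same subspace $M=\mathrm{span}\left\{ \left( n^{-c}\right)_n : c\in(0,1)\right\}$ of $c_0\setminus\ell_1$, noting that $\mathfrak{c}$ is the maximal dimension of a subspace of $C([a,b])^{\N}$. The only difference is that you spell out, via the asymptotic dominance of the smallest exponent, the verifications that $\dim(M)=\mathfrak{c}$ and $M\setminus\{0\}\subset c_0\setminus\ell_1$, which the paper leaves implicit.
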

We can obtain the same from Theorem \ref{Thlin2} just considering the linear subspace in $\mathcal{F} \cup \{0\} $ given by $U=\text{span} \{ \mathcal{J}_\Lambda ( x^c ):  c \in (0,+ \infty)\}$, where $\Lambda$ is any prefixed sequence of scalars as in Example \ref{sucesion}.

We denote by $c_0(C([a,b]))$ the topological vector space of all sequences $(f_n)_n \in C([a,b])^{\N}$ such that $||f_n||_\infty \to 0 $ ($n \to \infty$). It is known that $(c_0(C([a,b])), || \cdot ||_{\ell^\infty} )$ is a separable Banach space. It is clear that $\mathcal{AMW} \subset c_0(C([a,b]))$, so if we take into account the topological structure of $c_0(C([a,b]))$, we can focus our attention on density properties enjoyed by $\mathcal{AMW}$. A first positive result in this direction is given by the application of the following lemma due to Bernal \cite{Berstudia} (see also \cite[Chapter 4]{librolineabilidad}, \cite{topology} and \cite{BerOr}).

\begin{lemma}\label{denso}
Let $X$ be a separable metrizable topological vector space, $A \subset X$ maximal lineable and $B \subset X$ dense lineable in $X$ with $A \cap B = \varnothing$. If $A + B \subset A$ then $A$ is maximal dense lineable in $X$.
\end{lemma}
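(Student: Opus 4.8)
The plan is to extract witnessing subspaces from the two hypotheses and then splice them into a single subspace that is simultaneously of maximal dimension, dense, and---apart from the origin---contained in $A$. Since $A$ is maximal lineable, I first fix a subspace $W$ with $\dim W = \dim X =: \kappa$ and $W \setminus \{0\} \subset A$. Since $B$ is dense lineable, I fix a dense subspace $V_0$ with $V_0 \setminus \{0\} \subset B$; using that $X$ is separable metrizable, I replace $V_0$ by the span $V$ of a countable dense subset of $V_0$, so that $V$ is a countable-dimensional dense subspace with $V \setminus \{0\} \subset B$. I note at once that $A \cap B = \varnothing$ forces $W \cap V = \{0\}$, since a nonzero common vector would lie in $(W\setminus\{0\}) \cap (V \setminus\{0\}) \subset A \cap B$. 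Hence the sum $W + V$ is direct, and I write $\pi_W$ for the projection of $W \oplus V$ onto $W$ along $V$.

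The crucial observation is that $A + B \subset A$ lets me certify membership in $A$ purely from the $W$-component: if $z = m + v$ with $m \in W$, $v \in V$ and $m \neq 0$, then either $v = 0$ and $z = m \in W \setminus \{0\} \subset A$, or $v \neq 0$ and $z = m + v \in A + B \subset A$. Thus any subspace $Z \subset W \oplus V$ on which $\pi_W$ is injective automatically satisfies $Z \setminus \{0\} \subset A$. The entire construction therefore reduces to producing such a $Z$ that is \emph{also} dense and of dimension $\kappa$.

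The tension---and the main obstacle---is that injectivity of $\pi_W|_Z$ means $Z \cap V = \{0\}$, so $Z$ cannot contain the dense set $V$ outright, yet $Z$ must be dense. I resolve this by perturbing the dense vectors by vanishingly small multiples of \emph{distinct} basis vectors of $W$. Fix a Hamel basis $\{w_i\}_{i \in I}$ of $W$ (so $\card I = \kappa$), select from it a countable subfamily $\{w_{n,k} : (n,k) \in \N \times \N\}$, and let $I''$ be the set of remaining indices, still of cardinality $\kappa$. Writing $V = \operatorname{span}\{v_n : n \in \N\}$ and invoking continuity of scalar multiplication, I choose scalars $t_{n,k} \neq 0$ so small that $v_n + t_{n,k} w_{n,k}$ lies within distance $1/k$ of $v_n$ in a fixed compatible metric. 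I then set
\[
Z := \operatorname{span}\Big(\{\, v_n + t_{n,k}\, w_{n,k} : (n,k) \in \N\times\N \,\} \cup \{\, w_i : i \in I'' \,\}\Big).
\]

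It remains to verify the three required properties, all of which I expect to be routine. Linear independence of the displayed generators follows by separating $V$- and $W$-components and using that the $w$'s are distinct basis vectors together with $t_{n,k} \neq 0$; this yields $\dim Z = \aleph_0 + \kappa = \kappa$ and, by the same bookkeeping, $Z \cap V = \{0\}$, whence $Z \setminus \{0\} \subset A$ as explained above. For density, each $v_n$ is the limit of $v_n + t_{n,k}\, w_{n,k} \in Z$ as $k \to \infty$, so the closed subspace $\overline{Z}$ contains every $v_n$, hence contains $V$, hence equals $X$ because $V$ is dense. Therefore $Z$ witnesses the maximal dense lineability of $A$. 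The only genuinely delicate point is the density step, where the distinctness of the perturbing directions $w_{n,k}$ is essential: reusing a single $w$ would force $w \in Z$ and collapse $Z \cap V = \{0\}$, reintroducing forbidden elements of $B$ into $Z$.
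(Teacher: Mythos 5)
Your proof is correct, and it is essentially the standard argument: the paper itself gives no proof of this lemma but quotes it from Bernal \cite{Berstudia} (see also \cite{topology} and \cite{BerOr}), and the proof in those sources proceeds exactly as you do, perturbing a countable dense sequence coming from the dense-lineable witness by small nonzero multiples of distinct Hamel basis vectors of the maximal-lineable witness, with $A+B\subset A$ certifying membership in $A$ from the nonvanishing $W$-component. Your implicit assumption that $\dim X$ is infinite (needed to extract the countable subfamily $\{w_{n,k}\}$ while keeping $\card(I'')=\kappa$) is harmless, since in the degenerate finite-dimensional case the hypotheses cannot all hold and in the paper's application $\kappa=\mathfrak{c}$.
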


\begin{theorem}
The family $\mathcal{AMW}$ of Anti-M Weierstrass sequences is maximal dense lineable in $c_0( C([a,b]) )$.
\end{theorem}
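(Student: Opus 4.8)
The plan is to apply Lemma \ref{denso} with the ambient space $X = c_0(C([a,b]))$, taking $A = \mathcal{AMW}$ and choosing a suitable dense-lineable set $B$. The lemma reduces the theorem to verifying three things: that $\mathcal{AMW}$ is maximal lineable in $X$, that we can find a dense-lineable $B$ disjoint from $\mathcal{AMW}$, and that $\mathcal{AMW}+B\subseteq \mathcal{AMW}$. The maximal lineability is already in hand from Corollary \ref{colin1} (noting that the witnessing subspace from Theorem \ref{Thlin1} lives inside $c_0(C([a,b]))$), so the real work is constructing $B$ with the algebraic closure property $A+B\subseteq A$.

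The natural candidate for $B$ is a dense subset consisting of sequences whose corresponding series of sup-norms \emph{converges}, i.e.\ sequences that genuinely satisfy the Weierstrass M-test. Concretely, I would take $B$ to be (a dense-lineable subspace inside) the set of $(f_n)_n$ with $\sum_n\|f_n\|_\infty<+\infty$; the most robust choice is the set of \emph{finitely supported} sequences $c_{00}(C([a,b]))$, i.e.\ sequences with only finitely many nonzero terms. This set is a vector space, it is dense in $c_0(C([a,b]))$ (a standard truncation argument: replace $(f_n)_n$ by its first $N$ terms and let $N\to\infty$), and hence dense-lineable. Crucially, $B\cap\mathcal{AMW}=\varnothing$, since every element of $B$ has convergent series of sup-norms and therefore fails the defining divergence condition of $\mathcal{AMW}$.

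The key step is then $\mathcal{AMW}+B\subseteq\mathcal{AMW}$. Let $(f_n)_n\in\mathcal{AMW}$ and $(g_n)_n\in B$. Absolute and uniform convergence of $\sum_n(f_n+g_n)$ on $[a,b]$ is immediate, because $\sum_n f_n$ converges absolutely and uniformly by hypothesis while $\sum_n g_n$ has only finitely many nonzero terms. For the divergence of $\sum_n\|f_n+g_n\|_\infty$, I would use that $g_n=0$ for all $n$ beyond some index $N$, so that $\|f_n+g_n\|_\infty=\|f_n\|_\infty$ for $n>N$; hence the tail $\sum_{n>N}\|f_n+g_n\|_\infty$ equals the tail $\sum_{n>N}\|f_n\|_\infty$, which diverges because $\sum_n\|f_n\|_\infty=+\infty$. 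Adding back the finitely many initial terms cannot restore convergence, so $(f_n+g_n)_n\in\mathcal{AMW}$.

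The main obstacle is essentially bookkeeping about how the three hypotheses of Lemma \ref{denso} interact, rather than any deep analytic difficulty: one must make sure that the disjointness $A\cap B=\varnothing$ and the absorption $A+B\subseteq A$ hold for the \emph{same} choice of $B$, which is exactly what the convergent-versus-divergent dichotomy of the sup-norm series guarantees. Once $B$ is chosen as the finitely supported sequences, all three verifications are short, and the conclusion that $\mathcal{AMW}$ is maximal dense lineable in $c_0(C([a,b]))$ follows directly from the lemma.
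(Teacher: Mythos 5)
Your proposal is correct and follows essentially the same route as the paper: the authors also apply Lemma \ref{denso} with $X=c_0(C([a,b]))$, $A=\mathcal{AMW}$, and $B=c_{00}(C([a,b]))$, using Corollary \ref{colin1} for maximal lineability and the observation that adding a finitely supported sequence alters only finitely many components, so $\mathcal{AMW}+c_{00}(C([a,b]))\subset\mathcal{AMW}$. Your explicit verification of the disjointness $A\cap B=\varnothing$ via the convergence of the sup-norm series for elements of $c_{00}$ is a small point the paper leaves tacit, but otherwise the arguments coincide.
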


\begin{proof}
From Corollary \ref{colin1}, the family $\mathcal{AMW}$ is maximal lineable. Now, the family
$$c_{00}(C([a,b])):= \{ (f_n)_n \in C([a,b])^{\N}: \ \text{there is } N \text{ such that } f_n=0 \text{ for any } n \geq N \}$$
is a dense linear subspace of $c_0(C([a,b]))$ and so, trivially, it is a dense lineable subset of $c_0(C([a,b]))$. For a fixed $(f_n)_n \in \mathcal{AMW}$, each sequence $(g_n)_n \in c_{00}(C([a,b]))$ only modifies, under addition, a finite number of components of $(f_n)_n$. So $(f_n + g_n )_n \in \mathcal{AMW}$. In other words, $c_{00}(C([a,b])) + \mathcal{AMW} \subset \mathcal{AMW}$. Now, an application of Lemma \ref{denso}, with $X:=c_0(C([a,b]))$, $A:= \mathcal{AMW}$ and $ B:= c_{00}(C([a,b]))$, gives us the maximal dense-lineability of $\mathcal{AMW}$ in $c_0(C([a,b]))$.
\end{proof}

Finally, we are also able to get the spaceability of the anti-M Weierstrass family of sequences. For this, the linear isometric property of the mapping $\mathcal{J}_\Lambda$ and the (trivial) spaceability of $C([a,b])$ (see \cite{librolineabilidad}, \cite{survey}) play an important role.

\begin{theorem} \label{spaceability}
The family $\mathcal{AMW}$ of anti-M Weierstrass sequences is spaceable in $c_0( \mathcal{C}([a,b]) )$. In fact, for any $\Lambda =(\alpha_n)_n$ with $a=\alpha_1 < \alpha_2 < \cdots < \alpha_n < \cdots \to b \ (n \to \infty)$, any sequence $(a_n)_n \in c_0 \setminus \ell_1$ and any infinite dimensional closed vector subspace $\mathcal{C}$ of $C([a,b])$, the set
$$\mathcal{M} := \{ (a_n u_n^{\Lambda ,f })_n: \ f \in \mathcal{C} \}$$
is an infinite dimensional closed vector subspace of $(c_0 (C([a,b])), || \cdot ||_{\ell_\infty})$ such that $\mathcal{M}\setminus\{0\} \subset \mathcal{AMW} $.
\end{theorem}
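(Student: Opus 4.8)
The plan is to exploit the linear isometry $\mathcal{J}_\Lambda$ from Proposition \ref{isometry} to transport the closed infinite-dimensional subspace $\mathcal{C}$ of $C([a,b])$ into the sequence space, and then to multiply by the fixed sequence $(a_n)_n \in c_0 \setminus \ell_1$. First I would define the composite map $\Theta : f \mapsto (a_n u_n^{\Lambda,f})_n$ and observe that it factors as $\Theta = \Psi \circ \mathcal{J}_\Lambda$, where $\Psi$ is the multiplication-by-$(a_n)_n$ operator appearing in the proof of Theorem \ref{Thlin2}. The key claim is that $\Theta$ is a \emph{topological isomorphism onto its image}, and that its image is precisely $\mathcal{M}$; from this, the fact that $\mathcal{C}$ is closed and infinite dimensional in $C([a,b])$ will force $\mathcal{M}$ to be closed and infinite dimensional in $c_0(C([a,b]))$, while Corollary \ref{AMW} already guarantees $\mathcal{M}\setminus\{0\} \subset \mathcal{AMW}$.

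The main technical point, and the step I expect to be the real obstacle, is establishing that $\Theta$ is \emph{bounded below}, i.e.\ that there is a constant $\delta > 0$ with $\|\Theta(f)\|_{\ell_\infty} \geq \delta \|f\|_\infty$ for every $f \in C([a,b])$. Here $\|\Theta(f)\|_{\ell_\infty} = \sup_n \|a_n u_n^{\Lambda,f}\|_\infty = \sup_n |a_n| \cdot \|f\|_\infty$, using property (4) of Example \ref{sucesion} that $\|u_n^{\Lambda,f}\|_\infty = \|f\|_\infty$. Since $(a_n)_n \in c_0 \setminus \ell_1$ is nonzero, $A := \sup_n |a_n| \in (0,+\infty)$, and we obtain the clean identity $\|\Theta(f)\|_{\ell_\infty} = A \cdot \|f\|_\infty$. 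Thus $\Theta$ is in fact a scalar multiple of an isometry, hence a topological isomorphism onto its range; this also secures injectivity and shows that $\Theta$ maps Cauchy sequences to Cauchy sequences and preserves completeness.

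With $\Theta$ identified as a (scaled) linear isometric embedding, I would then argue as follows. Since $\mathcal{C}$ is a closed subspace of the complete space $C([a,b])$, it is itself complete. Because $\Theta$ restricted to $\mathcal{C}$ is a topological isomorphism onto $\mathcal{M} = \Theta(\mathcal{C})$, the image $\mathcal{M}$ is complete, and a complete subspace of the Banach space $c_0(C([a,b]))$ is closed. Infinite dimensionality transfers through the injective linear map $\Theta$, so $\dim \mathcal{M} = \dim \mathcal{C} = \infty$. Finally, every nonzero element of $\mathcal{M}$ has the form $(a_n u_n^{\Lambda,f})_n$ with $f \neq 0$; by Example \ref{sucesion} we have $(u_n^{\Lambda,f})_n \in \mathcal{F}$, and since $(a_n)_n \in c_0 \setminus \ell_1$, Corollary \ref{AMW} yields $(a_n u_n^{\Lambda,f})_n \in \mathcal{AMW}$. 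This establishes $\mathcal{M}\setminus\{0\} \subset \mathcal{AMW}$ and completes the spaceability, the general spaceability statement following by taking $\mathcal{C}$ to be any infinite-dimensional closed subspace of $C([a,b])$, whose existence is the trivial spaceability of $C([a,b])$.
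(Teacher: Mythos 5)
Your proposal is correct, and its skeleton coincides with the paper's: transport the closed subspace $\mathcal{C}$ into sequence space via the linear isometry $\mathcal{J}_\Lambda$ of Proposition \ref{isometry}, multiply coordinatewise by $(a_n)_n$, and invoke Corollary \ref{AMW} to get $\mathcal{M}\setminus\{0\}\subset\mathcal{AMW}$. Where you genuinely differ is the closedness step. The paper works inside the larger space $\ell_\infty(C([a,b]))$: it notes that $\mathcal{J}_\Lambda(\mathcal{C})$ is closed there, records the one-sided bound $\|(a_nu_n^{\Lambda,f})_n\|_{\ell_\infty}\le L\,\|(u_n^{\Lambda,f})_n\|_{\ell_\infty}$ together with $\|a_nu_n^{\Lambda,f}\|_\infty\to0$, asserts that $\mathcal{M}$ is closed in $\ell_\infty(C([a,b]))$, and then uses that $c_0(C([a,b]))$ is closed in $\ell_\infty(C([a,b]))$. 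You instead prove the exact identity $\|\Theta(f)\|_{\ell_\infty}=\bigl(\sup_n|a_n|\bigr)\cdot\|f\|_\infty$ from property (4) of Example \ref{sucesion}, so that $\Theta$ is a scalar multiple of an isometry; completeness of $\mathcal{C}$ then transfers to $\mathcal{M}=\Theta(\mathcal{C})$, which is therefore closed, and injectivity and infinite-dimensionality come for free. This is arguably the tighter argument: mere boundedness of the multiplication operator --- which is all the paper's displayed inequality provides --- does not by itself give closed range (coordinatewise multiplication by a $c_0$ sequence is not bounded below on all of $\ell_\infty(C([a,b]))$), and the exact norm identity you isolate is precisely what the paper only records in the Remark \emph{after} the theorem. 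One micro-gap to patch: before invoking ``a complete subspace of the Banach space $c_0(C([a,b]))$ is closed,'' you should state in one line why $\mathcal{M}\subset c_0(C([a,b]))$ at all, namely $\|a_nu_n^{\Lambda,f}\|_\infty=|a_n|\,\|f\|_\infty\to0$ since $(a_n)_n\in c_0$ --- an immediate consequence of the identity you already proved.
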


\begin{proof}
Let $\Lambda$, $(a_n)_n$ and $\mathcal{C}$ as above. It is plain that $\mathcal{M}$ is a linear space. From Proposition \ref{isometry}, $\mathcal{J}_\Lambda$ is a linear isometry, so $\mathcal{J}_{\Lambda} (\mathcal{C})$ is an infinite dimensional closed vector space of $\ell_\infty (C([a,b]))$ such that $  \mathcal{J}_{\Lambda} (\mathcal{C}) \setminus \{0 \} \subset \mathcal{F}$. But $(a_n)_n \in c_0$, so there exists $L>0$ such that $|a_n| \leq L$ for all $n \in \N$, and for any $f \in \mathcal{C}$ we have
$$||(a_n u_n^{\Lambda , f } )_n ||_{\ell_\infty} = \sup_{n \in \N} ||a_n u_n^{\Lambda ,f} ||_\infty  \leq L \cdot || (u_n^{\Lambda ,f})_n ||_{\ell_\infty},$$
and
$$||a_n u_n^{\Lambda , f } ||_\infty = |a_n | \cdot ||f||_\infty \to 0 \qquad (n \to \infty).$$
Therefore $\mathcal{M}$ is a closed vector subspace of $(\ell_\infty (C([a,b])), ||\cdot ||_{\ell_\infty})$ that is contained in $c_0 (C([a,b]))$. But $c_0 (C([a,b]))$ is closed in $\ell_\infty (C([a,b]))$, hence $\mathcal{M}$ is a closed vector subspace of $(c_0 (C([a,b])), || \cdot ||_{\ell_\infty})$. Of course, $\mathcal{M}$ is infinite dimensional because $\mathcal{C}$ is, there are infinitelly many $a_n \ne 0$ (recall $(a_n)_n \notin \ell_1$) and $\mathcal{J}_\Lambda $ is a linear isometry such that $u_n^{\Lambda , f} =0$ for some $n \in \N $ if and only if $f =0$ in $[a,b]$.

Finally, as $(a_n)_n \in c_0 \setminus \ell_1$ and $\mathcal{J}_\Lambda (\mathcal{C}) \setminus \{ 0 \}  \subset \mathcal{F}$, we get from Corollary \ref{AMW}, $\mathcal{M}\setminus \{0 \} \subset \mathcal{AMW}$.
\end{proof}
\begin{remark}
Observe that, for any sequence $\Lambda $ as above and any $(a_n)_n \in c_0 \setminus \ell_1$ we have that for $L:= \max_{n \in \N} |a_n| \in (0, + \infty )$,
$$\begin{array}{rcl}
||\lambda_1 (a_n u_n^{\Lambda , f_{1}})_n + \dots + \lambda_s (a_n u_n^{\Lambda , f_{s}})_n ||_{\ell_\infty} & = &|| (a_n u_n^{\Lambda ,(\lambda_1  f_{1}+ \cdots+ \lambda_{s} f_{s})})_n ||_{\ell_\infty} \\[1em]
  & =  & L \cdot ||\lambda_1  f_{1}+ \cdots +\lambda_{s} f_{s}||_\infty,
\end{array}
$$
where $\lambda_1, \dots ,\lambda_s \in \R$ and $f_1, \dots , f_s \in C([a,b])$. In particular, we can drop the restriction $a_n \ne 0$ in Theorem \ref{Thlin2} and we can state the algebraic basis of the subspaces in $\mathcal{AMW}$. Specifically:

(1) If $\{ f_i \}_{i\in I} \subset C([a,b])$ are linearly independent, then the family $M= \{ (a_n u_n^{\Lambda , f_i })_n: \ i \in I \}$ is linearly independent and $\text{span} (M) \setminus \{0\} \subset \mathcal{AMW} $;

(2) If $\overline{\text{span}}\{ f_i : \ i\in I\}$ is a closed vector subspace of $C([a,b])$ with dimension $\kappa$, then the closed vector subspace $\mathcal{M}:= \overline{\text{span}}\{ (a_n u_n^{\Lambda , f_i})_n : \ i\in I\}$ of $c_0(C([a,b]))$ has the same dimension $\kappa$ , and $\mathcal{M} \setminus \{0\} \subset \mathcal{AMW} $.
\end{remark}

\section{Algebrability of $\mathcal{AMW}$}

In this section we provide several ways to generate free algebras in $\mathcal{AMW}$. But before this we state two technical lemmas.

\begin{lemma} \label{lemmaalg1}
Let $\mathcal{U}$ be a free algebra in $C([a,b])$, generated by $U :=\{ u_i \}_{i \in I}$. Then, for any family $P=\{p_i\}_{i \in I}$ of polynomials of degree exactly $1$, the set $U_P:= \{ p_i \circ u_i\}_{i \in I}$ is a generator system of a free algebra in $C([a,b])$.
\end{lemma}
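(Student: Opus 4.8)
The plan is to verify directly the defining property of a free (commutative, non‑unital) generating system: for every $N\in\N$, every nonzero polynomial $Q$ in $N$ variables without constant term, and every choice of distinct elements $v_1,\dots,v_N\in U_P$, one has $Q(v_1,\dots,v_N)\neq 0$. Write each $p_i(t)=\alpha_i t+\beta_i$ with $\alpha_i\neq 0$ (this is exactly where ``degree exactly $1$'' enters), so that $p_i\circ u_i=\alpha_i u_i+\beta_i\mathbf 1$, where $\mathbf 1$ denotes the constant function $1$. The presence of the additive constant $\beta_i$ is what makes the statement nontrivial: substituting the $v_j$ into $Q$ produces a polynomial expression in the $u_i$ that, a priori, carries a constant term, so the freeness hypothesis on $\{u_i\}$ cannot be applied verbatim.

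The hard part, and the key preliminary step, will be to show that the free algebra $\mathcal U$ contains no nonzero constant function. I would argue by contradiction: if $c\mathbf 1\in\mathcal U$ with $c\neq 0$, then $c\mathbf 1=P(u_{i_1},\dots,u_{i_N})$ for some polynomial $P$ without constant term and distinct generators, and the polynomial $Q_0:=P^2-cP=P(P-c)$ is again without constant term. It is nonzero because $\R[x_1,\dots,x_N]$ is an integral domain and both $P$ and $P-c$ are nonzero (the latter having constant term $-c\neq 0$). Yet $Q_0(u_{i_1},\dots,u_{i_N})=c^2\mathbf 1-c\cdot c\mathbf 1=0$, contradicting the freeness of $\{u_i\}$. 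This same fact immediately yields that the assignment $i\mapsto p_i\circ u_i$ is injective, so that $U_P$ is genuinely indexed by $I$: if $p_i\circ u_i=p_j\circ u_j$ for $i\neq j$, then $\alpha_i u_i-\alpha_j u_j$ would be a constant function which, by freeness, is a nonzero element of $\mathcal U$, again impossible.

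With this in hand, the main computation is a routine affine change of variables. Given distinct $v_1,\dots,v_N\in U_P$, I would pick distinct indices $i_1,\dots,i_N$ with $v_j=\alpha_{i_j}u_{i_j}+\beta_{i_j}\mathbf 1$ and set
$$R(y_1,\dots,y_N):=Q(\alpha_{i_1}y_1+\beta_{i_1},\dots,\alpha_{i_N}y_N+\beta_{i_N}),$$
so that $Q(v_1,\dots,v_N)=R(u_{i_1},\dots,u_{i_N})$. Since each $\alpha_{i_j}\neq 0$, the substitution is an invertible affine change of coordinates and hence induces an automorphism of the polynomial ring; in particular $R\neq 0$. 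Splitting off its constant term, I write $R=c+\widetilde R$ as polynomials, with $c:=R(0,\dots,0)$ and $\widetilde R$ without constant term, whence $Q(v_1,\dots,v_N)=c\mathbf 1+\widetilde R(u_{i_1},\dots,u_{i_N})$.

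Finally I would conclude as follows. The polynomial $\widetilde R$ is nonzero: otherwise $R$ would be constant, forcing $Q$ to be constant, hence $Q=0$ (it has no constant term), against the hypothesis. By freeness of $\{u_i\}$ the element $\widetilde R(u_{i_1},\dots,u_{i_N})$ is then a nonzero member of $\mathcal U$. If $Q(v_1,\dots,v_N)$ were $0$, the identity above would give $\widetilde R(u_{i_1},\dots,u_{i_N})=-c\mathbf 1$, a constant function in $\mathcal U$; by the key step such a function must be zero, i.e. $c=0$ and $\widetilde R(u_{i_1},\dots,u_{i_N})=0$, a contradiction. Hence $Q(v_1,\dots,v_N)\neq 0$, and since it plainly belongs to the algebra generated by $U_P$, the set $U_P$ freely generates it. I expect the only delicate point to be the no-nonzero-constant lemma; the remaining steps are bookkeeping with the change of variables.
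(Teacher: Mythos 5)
Your proof is correct, and it takes a genuinely different (and in one respect more careful) route than the paper's. The paper argues computationally: it writes an element of the new algebra as $F=\sum_{{\bf j}\in J}\lambda_{\bf j}(\alpha_1u_1+\beta_1)^{j_1}\cdots(\alpha_Nu_N+\beta_N)^{j_N}$, expands every factor by the binomial theorem, and claims that $F\equiv 0$ forces each coefficient $\lambda_{\bf j}\prod_k\binom{j_k}{l_k}\beta_k^{l_k}\alpha_k^{j_k-l_k}$ to vanish; taking $l_k=0$ then gives $\lambda_{\bf j}\alpha_1^{j_1}\cdots\alpha_N^{j_N}=0$, hence $\lambda_{\bf j}=0$. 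You instead observe that the substitution $y_j\mapsto\alpha_{i_j}y_j+\beta_{i_j}$ with $\alpha_{i_j}\neq 0$ is an automorphism of $\R[y_1,\dots,y_N]$, so the transformed polynomial $R$ is nonzero, and you reduce everything to your preliminary lemma that a free algebra contains no nonzero constant function (proved neatly via the factorization $P(P-c)$, using that $\R[x_1,\dots,x_N]$ is an integral domain). This buys you two things the paper glosses over: first, after the binomial expansion distinct pairs $({\bf j},{\bf l})$ can produce the \emph{same} monomial $u_1^{m_1}\cdots u_N^{m_N}$, so freeness only forces the vanishing of the total coefficient of each monomial, not of each individual summand as the paper asserts (the paper's conclusion is recoverable by downward induction over maximal exponents ${\bf j}\in J$, but this is not said); second, and more substantively, the expansion produces a pure constant term (the terms with $l_k=j_k$ for all $k$), which the freeness hypothesis --- stated only for polynomials \emph{without} constant term --- does not control; your no-nonzero-constants lemma is exactly the missing ingredient that justifies discarding it, and it also yields your injectivity remark, showing $\card(U_P)=\card(I)$, which the paper uses tacitly when it speaks of a (minimal) generator system. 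In short, the paper's approach makes the coefficient structure explicit, while yours is structurally cleaner and closes a genuine small gap in the paper's own argument.
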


\begin{proof}
  Let $\mathcal{U_P}$ be the algebra generated by $U_P$. Let us see that $\mathcal{U_P}$ is free. By hypothesis, $p_i(x)= \alpha_i x + \beta_i$ with $\alpha_i, \beta_i \in \R$, $\alpha_i \ne 0$, for each $i \in I$. For every $F \in \mathcal{U_P}$, there is a polynomial $P(x_1, \dots ,x_N) = \sum_{{\bf j} \in J} \lambda_{{\bf j}} x_1^{i_1} \cdots x_N^{i_N}$ in $N$ ($N\in\N$) real variables, where $J \subset \N_0^N \setminus \{(0,\dots ,0)\}$ is finite, $\lambda_{\bf j} \in \R \setminus \{0 \}$, ${\bf j}=(j_1, \dots ,j_N) \in  J$, and such that $F=P(p_{i_1} \circ u_{i_1}, \dots, p_{i_N} \circ u_{i_N})$ for some pairwise different $i_1, \dots, i_N \in I$. Now, we relabel $p_{i_l} \circ u_{i_l} =: p_l \circ u_l$ for $l=1, \dots, N$. Then, for every $x \in [a,b]$, we have
  $$F(x)= \sum_{{\bf j} \in J} \lambda_{\bf j} (p_1(u_1(x)))^{j_1} \cdots (p_N(u_N(x)))^{j_N}=$$
$$  \sum_{{\bf j} \in J} \lambda_{\bf j}  (\alpha_1 u_1(x)+ \beta_1)^{j_1} \cdots (\alpha_N u_N(x)+ \beta_N)^{j_N}=$$
$$ \sum_{{\bf j} \in J} \lambda_{{\bf j}} \sum_{l_1=0}^{j_1} \binom{j_1}{l_1} \beta_1^{l_1} \cdot (\alpha_1u_1(x))^{j_1-l_1} \cdots \sum_{l_N=0}^{j_N} \binom{j_N}{l_N}  \beta_N^{l_N} \cdot (\alpha_N u_N(x))^{j_N-l_N}=$$
$$\sum_{{\bf j} \in J}  \sum_{l_1=0}^{j_1} \cdots \sum_{l_N=0}^{j_N} \lambda_{\bf j} \left( \binom{j_1}{l_1} (\beta_1^{l_1}  \cdot (\alpha_1 u_1(x))^{j_1-l_1}) \cdots \binom{j_N}{l_N} (\beta_N^{l_N} \cdot (\alpha_N u_N(x))^{j_N-l_N}) \right)=$$
$$\sum_{{\bf j} \in J}  \sum_{l_1=0}^{j_1} \cdots \sum_{l_N=0}^{j_N} \left( \lambda_{\bf j} \prod_{k=1}^{N}\binom{j_k}{l_k} \beta_k^{l_k}\cdot \alpha_k^{j_k -l_k} \right)   u_1(x)^{j_1-l_1} \cdots  u_N(x)^{j_N-l_N}.$$

But $\mathcal{U}$ is free, so if $F \equiv 0$ then $\lambda_{\bf j} \prod_{k=1}^{N}\binom{j_k}{l_k} \beta_k^{l_k}\cdot \alpha_k^{j_k -l_k} =0$
for any ${\bf j}=(j_1, \dots, j_N) \in J$, any $l_k  \in \{0, \dots, j_k\}$ and any $k \in \{1, \dots ,N\}$. In particular, by taking $l_k=0$ for any $k=1, \dots, N$ we obtain $\lambda_{\bf j} \cdot \left( \alpha_1^{j_1} \cdots \alpha_N^{j_N}\right)=0$ for any ${\bf j} \in J$, and since $\alpha_i \ne 0$ for all $i \in I$ we get $\lambda_{{\bf j}} =0$ for all ${\bf j} \in J$. Consequently $P \equiv 0$, as required.
\end{proof}

\begin{lemma} \label{lemmaalg2}
Let $(a_n)_n \in c_0 \setminus \bigcup_{p \geq 1} \ell_p$. Assume that $P$ is a polynomial with real coefficients and without constant term. Then
$$(P(a_n))_n \in c_0 \setminus \ell_1.$$
\end{lemma}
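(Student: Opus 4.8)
The plan is to verify the two membership claims separately: that $(P(a_n))_n$ lies in $c_0$, which is immediate, and that it fails to lie in $\ell_1$, which is where the hypothesis $(a_n)_n\notin\bigcup_{p\ge1}\ell_p$ really enters. For the first claim, since $P$ has no constant term we have $P(0)=0$, so by continuity of the polynomial $P$ together with $a_n\to 0$ we get $P(a_n)\to P(0)=0$; hence $(P(a_n))_n\in c_0$.

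For the non-summability, the idea I would use is to extract the \emph{lowest}-order monomial of $P$ rather than the top one. Write $P(x)=c_mx^m+c_{m+1}x^{m+1}+\cdots+c_dx^d$, where $c_m\ne 0$ is the nonzero coefficient of smallest index; since $P$ has no constant term, $m\ge 1$. Factoring out $x^m$ gives $P(x)=x^mQ(x)$ with $Q(x)=c_m+c_{m+1}x+\cdots+c_dx^{d-m}$, so that $Q(0)=c_m\ne 0$.

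Now the key estimate: because $a_n\to 0$ and $Q$ is continuous with $Q(0)=c_m\ne 0$, there is an $N\in\N$ such that $|Q(a_n)|\ge |c_m|/2$ for all $n\ge N$. Consequently
\[
|P(a_n)|=|a_n|^m\,|Q(a_n)|\ge \frac{|c_m|}{2}\,|a_n|^m \qquad (n\ge N).
\]
Since $m\ge 1$ is a positive integer and, by hypothesis, $(a_n)_n\notin\ell_m$, the series $\sum_n|a_n|^m$ diverges; hence $\sum_{n\ge N}|P(a_n)|\ge \frac{|c_m|}{2}\sum_{n\ge N}|a_n|^m=+\infty$, which shows $(P(a_n))_n\notin\ell_1$ and completes the argument.

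The step I expect to be the only real subtlety is the choice to factor through the lowest-order term. The naive reflex of bounding $|P(a_n)|$ below by its highest-degree contribution does not work, because for small arguments it is precisely the lowest-order monomial $c_mx^m$ that dominates, while the top-degree term is comparatively negligible. This is also the point at which the full strength of the hypothesis is needed: we do not merely use $(a_n)_n\notin\ell_1$, but rather $(a_n)_n\notin\ell_m$ for the specific integer $m$ determined by the order of vanishing of $P$ at the origin, and that membership failure is guaranteed precisely by $(a_n)_n\notin\bigcup_{p\ge1}\ell_p$.
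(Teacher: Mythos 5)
Your proof is correct and follows essentially the same route as the paper's: both isolate the lowest-order monomial $c_m x^m$ of $P$ and compare $\sum_n |P(a_n)|$ with the divergent series $\sum_n |a_n|^m$, using $(a_n)_n \notin \ell_m$. The only difference is cosmetic --- the paper phrases the comparison as a limit comparison test, $\lim_n |P(a_n)|/|a_n|^m = |c_m| > 0$ (after discarding the $n$ with $a_n=0$), whereas your explicit eventual bound $|Q(a_n)| \ge |c_m|/2$ unwinds the same limit and, as a minor bonus, remains valid even at indices where $a_n=0$.
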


\begin{proof}
We can write $P(x)= \sum_{j=0}^m p_j x^{t+j}$, where $t \in \N$, $p_j \in \R$, $p_0 \ne 0$. Then, because $(a_n)_n \in c_0$, we have $(P(a_n))_n \in c_0$. Moreover, $(a_n^t)_n \not\in \ell_1$, so there are infinitelly many $n$ such that $a_n^t \ne 0$ (without loss of generality we may assume $a_n \ne 0$ for all $n$), and
$$\lim_{n \to \infty } \frac{|P(a_n)|}{|a_n^t|}= \lim_{n \to \infty} \left| \sum_{j=0}^m p_j a_n^j \right| = |p_0| >0.$$
Thus, the result follows from the comparison test.
\end{proof}

We endow the vector space $C([a,b])^{\N}$ with the structure of linear algebra given by the coordenatewise multiplication: $(f_n)_n \cdot (g_n)_n= (f_n\cdot  g_n)_n$. Similarly, coordenatewise multiplication will be considered in the algebraic structure of the sequence space $c_0$.
\begin{theorem} \label{thalg1}
Assume that $(a_n)_n \in c_0 \setminus \bigcup_{p \geq 1} \ell_p$. Let $G :=\{ g_i \}_{i \in I} $ be a $($minimal$\,)$ generator system of a free algebra in $C([a,b])$. Let $\Lambda =(\alpha_n)_n \subset [a,b]$ such that $a=\alpha_1 < \alpha_2 < \cdots < \alpha_n < \alpha_{n+1} < \cdots \to b$  $(n \to \infty)$. Consider the family
$$U:=\{ (a_n u^i_n)_n: (u^i_n)_n= \mathcal{J}_{\Lambda} (\gamma_i g_i + 1)\}    ,$$
where $\gamma_i := \frac{1}{g_i(a)}$ if $g_i(a) \ne 0$ or $1$ if $g_i(a) = 0$.
Then $U$ is the $($minimal$\,)$ generator system of a free algebra in $\mathcal{AMW} \cup \{0\}$.
\end{theorem}
\begin{proof}
Let $\mathcal{A}$ be the algebra generated by $U$, that is, $(F_n)_n \in \mathcal{A}$ if there exist a polynomial $P(x_1, \dots , x_N)$ in $N$ $(N \in \N)$ real variables and some pairwise different $i_1, \dots i_N \in I$ such that
$$F_n= P(a_nu_n^{i_1}, a_nu_n^{i_2}, \dots , a_nu_n^{i_N}) \qquad (n \in \N).$$
Hence, by relabeling $(u_n^{i_l})_n=(u_n^l)$ ($l=1, \dots ,N$), there exist a nonempty finite set $J \subset \N_0^N \setminus \{(0,\dots ,0)\}$ and scalars $\lambda_{\bf j} \in \R \setminus \{0\}$ for ${\bf j}=(j_1, \dots, j_N) \in J$ such that for each $n \in \N$ and each $x \in [a,b]$,
\begin{equation} \label{0}
F_n(x)= \sum_{{\bf j} \in J} \lambda_{{\bf j}} (a_nu_n^1)^{j_1}(x) \cdots (a_n u_n^N)^{j_N}(x)=
\end{equation}
$$\sum_{{\bf j} \in J} \lambda_{{\bf j}} a_n^{|{\bf j}|} u_n^1(x)^{j_1} \cdots u_n^N(x)^{j_N},$$
where we have denoted $|{\bf j}| := j_1 + \cdots + j_N$.

But, by the definition of $\mathcal{J}_\Lambda$ (see property (3) in Example \ref{sucesion}), for each $n \in \N $ there is a linear affine transformation $\tau_n $ such that $\tau_n([\alpha_{3n-1}, \alpha_{3n}])= [a,b]$ and $u_n^l(x)=\mathcal{J}_{\Lambda} (\gamma_l g_l +1)(x) = \left( (\gamma_l g_l +1)(\tau_n(x))\right)_{n \geq 1}$ for any $x \in [\alpha_{3n-1}, \alpha_{3n}]$ and each $l=1, \dots, N$.
So, for every $n \in \N$,
$$F_n(x)= \sum_{{\bf j} \in J} \lambda_{{\bf j}} a_n^{|{\bf j}|} ((\gamma_1 g_1 +1)(\tau_n(x)))^{j_1} \cdots (\gamma_N g_N +1)(\tau_n (x)))^{j_N} \qquad{\rm for } \ x \in [\alpha_{3n-1}, \alpha_{3n}].$$
Therefore, if $F_n=0 $ in $[a,b]$, we get
\begin{equation} \label{algebra}
\sum_{{\bf j} \in J} \lambda_{{\bf j}} a_n^{|{\bf j}|} ((\gamma_1 g_1 +1)(w))^{j_1} \cdots (\gamma_N g_N +1)(w))^{j_N}=0 \qquad {\rm for \ all } \ w \in [a, b].
\end{equation}
Now, as $\{ g_i\}_{i \in I}$ generates a free algebra in $C([a,b])$, from Lemma \ref{lemmaalg1}, the equatity \eqref{algebra} and the fact that there are infinitely many $a_n \ne 0$, we get $\lambda_{\bf j}= 0$ for ${\bf j} \in J$ and the algebra $\mathcal{A}$ is free. Observe that, trivially, we also obtain that the dimension of $\mathcal{A}$ is the dimension of the algebra generated by $\{g_i\}_{i \in I}$ in $C([a,b])$.

It only remains to show that $\mathcal{A} \setminus\{ 0 \} \subset \mathcal{AMW}$, that is, that any sequence $(F_n)_n$ as above is Anti M-Weierstrass. Note that, again by the definition of the application $\mathcal{J}_\Lambda$ (see Example \ref{sucesion}, properties (2) and (4)), we have:
\begin{equation} \label{11}
\supp (u_n^l) \subset [\alpha_{3n-2}, \alpha_{3n+1}] \text{ for each } l =1, \dots ,N \text{ and } n \in \N,
\end{equation}
as well as
\begin{equation}\label{12}
||u_n^l||_\infty = ||\gamma_l g_l + 1||_\infty \in (0, + \infty) \text{ for each }  l =1, \dots ,N \text{ and each } n \in \N.
\end{equation}
Therefore, thanks to \eqref{11},
\begin{equation} \label{1}
\supp(u_n^1 \dots u_n^N) \cap \supp(u_m^1 \dots u_m^N) =\varnothing \qquad {\rm for} \ n\ne m,
\end{equation}
and we have the absolute convergence of $\sum_{n=1}^{\infty} F_n$. Now, by \eqref{12},
\begin{equation} \label{2}
\sup_{n \in \N} ||u_n^1 \cdots u_n^N||_{\infty} \leq ||\gamma_1 g_1+1||_\infty  \cdots ||\gamma_N g_N+1||_\infty =:C \in (0, \infty) ,
\end{equation}
and, as $(a_n)_n \in c_0$ by \eqref{0}, \eqref{11}, and \eqref{2}, the series $\sum_{n=1}^{\infty} F_n(x)$ converges uniformly in $[a,b]$ (note again that by \eqref{11} the supports of the $u_n^l$, $n \in \N$, are pairwise disjoint for each $l$, and this fact is crucial).
Finally, again by the definition of $\mathcal{J}_\Lambda$ (see Example \ref{sucesion}, property (1)),
\begin{equation} \label{3}
u_n^l (\alpha_{3n-1}) = \gamma_l g_l(a) +1 = : \delta = 1 \text{ or } 2 \text{ for any } l=1, \dots ,N \text{ and any } n \in \N.
\end{equation}
Now, by \eqref{0} and \eqref{3}, we obtain for each $n \in \N$ that
$$|F_n(\alpha_{3n-1})| = \left|  \sum_{{\bf j} \in J} \lambda_{{\bf j}} (a_nu_n^1)^{j_1}(\alpha_{3n-1}) \cdots (a_n u_n^N)^{j_N}(\alpha_{3n-1})\right|=\left| \sum_{{\bf j} \in J} \lambda_{\bf j} \delta^{|{\bf j}|} a_n^{|{\bf j}|} \right|.$$
But $(a_n )_n\in c_0 \setminus \bigcup_{p \geq 1}\ell_p$, so by applying Lemma \ref{lemmaalg2} to the polynomial $P(x) = \sum_{{\bf j} \in J} \lambda_{\bf j} \delta^{|{\bf j}|} x^{|{\bf j}|}$, we get
$$\sum_{n=1}^\infty ||F_n||_\infty \geq \sum_{n=1}^\infty \left| F_n(\alpha_{3n-1}) \right|= \sum_{n=1}^{\infty} | P(a_n)| = + \infty .$$
Thus $(F_n)_n \in \mathcal{AMW}$, and we are done.
\end{proof}
Recall that the family $\mathcal{F}$ is defining at the beginning of Section 2.
\begin{theorem} \label{thalg2}
Let $\mathcal{L}$ be a free algebra in $c_0 \setminus \ell_1$ generated by $\{ (a_n^i)_n\}_{i\in I}$. Let $(u_n)_n \in \mathcal{F}$ such that $u_n \geq 0$ on $[a,b]$ for each $n \in \N$. Then the algebra $\mathcal{A}$ generated  by $\{ (a_n^i u_n)_n \}_{i \in I}$
is a free algebra in $\mathcal{AMW} \cup \{0\}$ with the same dimension as $\mathcal{L}$.
\end{theorem}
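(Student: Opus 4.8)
The plan is to describe a generic element of $\mathcal A$ explicitly, exploit the fact that \emph{all} generators carry the same common factor $(u_n)_n$ to collapse everything into a one–variable polynomial in $u_n(x)$, and then play the freeness of $\mathcal L$ against the range of $u_n$. Fix $(F_n)_n\in\mathcal A$: there are a nonzero polynomial $P(x_1,\dots,x_N)$ without constant term and pairwise different $i_1,\dots,i_N\in I$ with $F_n=P(a_n^{i_1}u_n,\dots,a_n^{i_N}u_n)$. Writing $P=\sum_{d\ge 1}Q_d$ as the sum of its homogeneous parts $Q_d$ of degree $d$, and using that every variable is multiplied by the same $u_n$, I obtain the key identity
$$F_n(x)=\sum_{d\ge 1}Q_d\bigl(a_n^{i_1},\dots,a_n^{i_N}\bigr)\,u_n(x)^d=g_n\bigl(u_n(x)\bigr),\qquad g_n(t):=P\bigl(t\,a_n^{i_1},\dots,t\,a_n^{i_N}\bigr).$$

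First I would establish freeness. If $(F_n)_n\equiv 0$, then for each fixed $n$ the one–variable polynomial $g_n$ vanishes on the whole range of $u_n$; since $u_n\ge 0$ is continuous, vanishes off its (proper, nonempty) support and attains the value $\|u_n\|_\infty>0$, this range is the nondegenerate interval $[0,\|u_n\|_\infty]$, so $g_n\equiv 0$ and hence $Q_d(a_n^{i_1},\dots,a_n^{i_N})=0$ for every $d$ and every $n$. As each nonzero $Q_d$ is homogeneous of degree $d\ge 1$, and so has no constant term, freeness of $\mathcal L$ forces $Q_d\equiv 0$ for all $d$, whence $P\equiv 0$. Thus $\{(a_n^iu_n)_n\}_{i\in I}$ freely generates $\mathcal A$; these generators are pairwise distinct (equality on the nonempty supports would force $(a_n^i)_n=(a_n^{i'})_n$), so this system is in bijection with the free generators of $\mathcal L$ and the two algebras share the same (minimal generating) dimension.

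Next I would check $\mathcal A\setminus\{0\}\subset\mathcal{AMW}$, i.e.\ that a sequence $(F_n)_n$ with $P\not\equiv 0$ is Anti M-Weierstrass. Absolute convergence of $\sum_n F_n$ is immediate from the pairwise disjointness of the supports, exactly as in Lemma \ref{FA}(a). For uniform convergence I would note that for each fixed $d$ the sequence $(u_n^d)_n$ again lies in $\mathcal F$ (same supports, and $0<\inf_n\|u_n^d\|_\infty\le\sup_n\|u_n^d\|_\infty<\infty$ since $(u_n)_n\in\mathcal F$), while $\bigl(Q_d(a_n^{i_1},\dots,a_n^{i_N})\bigr)_n\in c_0$ because $Q_d$ has no constant term and each $(a_n^{i_l})_n\in c_0$; Lemma \ref{FA}(b) then gives uniform convergence of each series $\sum_n Q_d(\dots)\,u_n^d$, and $\sum_n F_n$ is a finite sum of these.

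The main obstacle is the divergence $\sum_n\|F_n\|_\infty=+\infty$, and this is precisely where the sign hypothesis $u_n\ge 0$ is used. Set $L:=\inf_n\|u_n\|_\infty>0$. Since $u_n$ is continuous, nonnegative, vanishes off its support and attains $\|u_n\|_\infty\ge L$, the intermediate value theorem yields $y_n$ with $u_n(y_n)=L$; the crucial point is that the \emph{same} value $L$ is admissible for every $n$. Evaluating there gives
$$\|F_n\|_\infty\ge|F_n(y_n)|=|g_n(L)|=\bigl|\widetilde P(a_n^{i_1},\dots,a_n^{i_N})\bigr|,\qquad \widetilde P(x_1,\dots,x_N):=P(Lx_1,\dots,Lx_N),$$
where $\widetilde P$ is again a nonzero polynomial without constant term. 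Freeness of $\mathcal L$ now gives $\widetilde P(a^{i_1},\dots,a^{i_N})\in\mathcal L\setminus\{0\}\subset c_0\setminus\ell_1$, so $\sum_n\|F_n\|_\infty\ge\sum_n|\widetilde P(a_n^{i_1},\dots,a_n^{i_N})|=+\infty$, and therefore $(F_n)_n\in\mathcal{AMW}$. I expect the delicate part to be recognizing that one may evaluate at a single common value $L$ (made possible by $u_n\ge 0$): this simultaneously linearizes the $u_n$–dependence into the rescaled polynomial $\widetilde P$ and lets the non-summability be imported directly from the freeness of $\mathcal L$, with no need for the stronger hypothesis of Lemma \ref{lemmaalg2}.
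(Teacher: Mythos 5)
Your proof is correct and follows essentially the same route as the paper: you expand $F_n$ into powers of $u_n$ with coefficients that are polynomial expressions in the $a_n^{i_l}$, apply Lemma \ref{FA} degreewise for the absolute and uniform convergence, and use $u_n\ge 0$ together with the intermediate value theorem to evaluate at points where $u_n$ takes the common value $L=\inf_n\|u_n\|_\infty$, so that both freeness and divergence reduce to the fact that the rescaled polynomial (nonzero, without constant term, since $L>0$) applied to the free generators yields an element of $\mathcal{L}\setminus\{0\}\subset c_0\setminus\ell_1$. Your only deviations are cosmetic --- you argue freeness via vanishing on the full range $[0,\|u_n\|_\infty]$ and homogeneous parts rather than the paper's single-point evaluation, and you make explicit (more carefully than the paper) that the evaluated sequence is nonzero by freeness before invoking $\mathcal{L}\setminus\{0\}\subset c_0\setminus\ell_1$.
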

\begin{proof}
  Let $(F_n)_n \in \mathcal{A}$. Then
  $$F_n(x):= \sum_{{\bf j} \in J} \lambda_{\bf j} (a_n^1 u_n(x))^{j_1} \cdots  (a_n^N u_n(x))^{j_N}=$$
  $$  \sum_{{\bf j} \in J} \lambda_{\bf j} (a_n^1)^{j_1} \cdots  (a_n^N)^{j_N}\cdot  (u_n(x))^{|{\bf j}|} \qquad (x \in [a,b]),$$
  where $N \in \N$, $J \subset \N_0^N \setminus \{(0,\dots ,0)\}$ is nonempty and finite and $\lambda_{\bf j} \in \R \setminus \{0\}$ for any ${\bf j}=(j_1, \dots, j_N) \in J$ (again we have relabeled $(a_n^{i_1})_n, \dots, (a_n^{i_N})_n$ as $(a_n^{1})_n, \dots, (a_n^{N})_n$).

  As $(u_n)_n \in \mathcal{F}$, we have that $L:=\inf_n ||u_n ||_\infty >0$ and any $u_n$ ($n \in \N $) takes the value $0$ in $[a,b]$. Then, by continuity of $u_n$'s and the intermediate value property, for any $n \in \N$ there is $x_n \in \supp(u_n)$ such that $L= |u_n(x_n)|= u_n(x_n)$ (recall that $u_n \geq 0$ on $[a,b]$). Hence
  \begin{equation}\label{teo2eq1}
    F_n(x_n)=  \sum_{{\bf j} \in J} \lambda_{\bf j} (a_n^1)^{j_1} \cdots  (a_n^N)^{j_N}\cdot  L^{|{\bf j}|}  \qquad (n \in \N).
  \end{equation}
If $(F_n)_n \equiv 0$, by (\ref{teo2eq1}), for each $n \in \N$
$$ \sum_{{\bf j} \in J} \lambda_{\bf j} L^{|{\bf j}|} \cdot (a_n^1)^{j_1} \cdots  (a_n^N)^{j_N}   =0.$$
So, because of $L>0$ and the free condition of the algebra $\mathcal{L}$, we get $\lambda_{\bf j} =0$ for each ${\bf j} \in  J$, and $\mathcal{A}$ is free.
As $\mathcal{L} \subset  c_0$, then $((a_n^1)^{j_1} \cdots (a_n^N)^{j_N})_n \in c_0 $ for any ${\bf j} =(j_1, \dots, j_N ) \in J$. Trivially, $(u_n^p)_n \in \mathcal{F}$ for any $p \in \N$. Hence, $(F_n)_n$ is a finite linear combination of products of sequences in $c_0$ and sequences $(u_n^{|{\bf j}|})_n \in \mathcal{F}$. Now, by Lemma \ref{FA}, we have the absolute and uniform convergence of the series $\sum_{n=1}^{\infty} F_n(x)$.
It only rest to show that $ (||F_n||_\infty )_n \not\in \ell_1$ to obtain $(F_n) \in \mathcal{AMW}$ and finish the proof. By (\ref{teo2eq1}),
$$\sum_{n=1}^{\infty } ||F_n||_\infty \geq \sum_{n=1}^{\infty} |F_n(x_n)| = \sum_{n=1}^{\infty} \left| \sum_{{\bf j} \in J} \lambda_{\bf j} L^{|{\bf j}|} \cdot (a_n^1)^{j_1} \cdots  (a_n^N)^{j_N} \right| = + \infty,$$
because of $\mathcal{L}$ is an algebra in $c_0 \setminus \ell_1$.
 \end{proof}

Finally, observe that $(a_n)_n\in \ell_p$ if and only if $(|a_n|^p)_n\in \ell_1$. In particular, $\mathcal{L}$ is an algebra in $c_0 \setminus \ell_1$ if and only if $\mathcal{L}$ is an algebra in $c_0 \setminus \bigcup_{p\geq 1} \ell_p$. In \cite[Theorem 2]{BarGlab13}, it is showed the existence of a free algebra $\mathcal{L}$ in $c_0 \setminus \bigcup_{p \geq 1} l_p$ such that the cardinality of any system of generators is the continuum. In fact, the authors of \cite{BarGlab13} consider the algebra generated by $ \displaystyle \left\{\left(\frac{1}{\ln^c n} \right)_{n\geq 2} : \ c \in C \right\}$, where $C\subset (0, +\infty)$ is linearly independent set over the field $\Q$ and
$\card(C) = \mathfrak{c}$. If we apply Theorem \ref{thalg2} with this algebra $\mathcal{L}$, we get the next corollary.
\begin{corollary}
The family $\mathcal{AMW}$ is strongly $\mathfrak{c}$-algebrable.
\end{corollary}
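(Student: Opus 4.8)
The plan is to read off the corollary from Theorem \ref{thalg2}, feeding it the free scalar-sequence algebra of Bartoszewicz and G\l{a}b together with a nonnegative member of $\mathcal{F}$. The only thing that needs to be reconciled is that Theorem \ref{thalg2} asks for a free algebra in $c_0\setminus\ell_1$, whereas the cited construction lives in $c_0\setminus\bigcup_{p\geq 1}\ell_p$.

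First I would establish the bookkeeping identity already flagged before the statement: under coordinatewise multiplication, an algebra $\mathcal{L}\subset c_0$ satisfies $\mathcal{L}\setminus\{0\}\subset c_0\setminus\ell_1$ if and only if $\mathcal{L}\setminus\{0\}\subset c_0\setminus\bigcup_{p\geq 1}\ell_p$. The nontrivial direction is immediate from closure under powers: if $(a_n)_n\in\mathcal{L}\setminus\{0\}$ then $(a_n^p)_n=(a_n)_n^{\,p}\in\mathcal{L}\setminus\{0\}\subset c_0\setminus\ell_1$ for every $p\in\N$, and since $\sum_n|a_n^p|=\sum_n|a_n|^p$ the condition $(a_n^p)_n\notin\ell_1$ is exactly $(a_n)_n\notin\ell_p$; letting $p$ range over $[1,+\infty)$ gives $(a_n)_n\in c_0\setminus\bigcup_{p\geq 1}\ell_p$. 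The reverse inclusion $c_0\setminus\bigcup_{p\geq 1}\ell_p\subset c_0\setminus\ell_1$ is trivial. Hence the two classes of free algebras coincide, and any admissible input for the cited theorem is admissible input for Theorem \ref{thalg2}.

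Next I would invoke \cite[Theorem 2]{BarGlab13}, which provides a free algebra $\mathcal{L}$ in $c_0\setminus\bigcup_{p\geq 1}\ell_p$ — concretely the one generated by $\bigl\{(1/\ln^c n)_{n\geq 2}:c\in C\bigr\}$ with $C\subset(0,+\infty)$ a $\Q$-linearly independent set of cardinality $\mathfrak{c}$ — whose minimal system of generators has cardinality $\mathfrak{c}$. By the previous paragraph $\mathcal{L}$ is equally a free algebra in $c_0\setminus\ell_1$. I would also fix any $(u_n)_n\in\mathcal{F}$ with $u_n\geq 0$ on $[a,b]$; such a sequence is available at once, for instance $\mathcal{J}_\Lambda(f)$ with $f\equiv 1$, which yields nonnegative trapezoidal bumps of sup norm $1$ with pairwise disjoint supports.

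Finally I would apply Theorem \ref{thalg2} to the pair $\bigl(\mathcal{L},(u_n)_n\bigr)$: it produces a free algebra $\mathcal{A}$ with $\mathcal{A}\setminus\{0\}\subset\mathcal{AMW}$ and of the same dimension as $\mathcal{L}$, hence generated by a system of cardinality $\mathfrak{c}$. This is exactly strong $\mathfrak{c}$-algebrability of $\mathcal{AMW}$. I expect no genuine obstacle at this final step, since the substantive work — freeness of $\mathcal{A}$ and the Anti M-Weierstrass property of its nonzero elements — is already discharged in Theorem \ref{thalg2}, and the existence of a $\mathfrak{c}$-generated free algebra in $c_0\setminus\bigcup_{p\geq 1}\ell_p$ is precisely the cited result; the two points deserving a line of care are the $\ell_p\leftrightarrow\ell_1$ identity used to align the hypotheses and the explicit choice of a nonnegative $(u_n)_n$, both routine.
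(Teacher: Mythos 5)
Your proposal is correct and takes essentially the same route as the paper: the authors likewise record that an algebra lies in $c_0\setminus\ell_1$ if and only if it lies in $c_0\setminus\bigcup_{p\geq 1}\ell_p$ (via $(a_n)_n\in\ell_p \Leftrightarrow (|a_n|^p)_n\in\ell_1$), invoke the free algebra of \cite[Theorem 2]{BarGlab13} generated by $\bigl\{(1/\ln^c n)_{n\geq 2}: c\in C\bigr\}$ with $C$ a $\Q$-linearly independent set of cardinality $\mathfrak{c}$, and then apply Theorem \ref{thalg2}. Your only additions --- spelling out the power-closure argument for the $\ell_1$/$\ell_p$ equivalence and exhibiting the nonnegative sequence $(u_n)_n=\mathcal{J}_\Lambda(f)$ with $f\equiv 1$, which the paper leaves implicit --- are correct and routine.
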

The same above result also follows by applying Theorem \ref{thalg1} to the free algebra generated by $\displaystyle \left\{ \left(\frac{x-a}{b-a}\right)^c: c \in C \right\}$, where $C$ is as above. The free algebra generated by $\{ e^{cx} : c \in C\}$ also works.




\end{document}